\documentclass[11pt, reqno]{amsart}

\newtheorem{theorem}{Theorem}[subsection]

\newtheorem{Corollary}[theorem]{Corollary}
\newtheorem{proposition}[theorem]{Proposition}

\theoremstyle{definition}
\newtheorem{definition}[theorem]{Definition}

\newlength{\margins}
\setlength{\margins}{1.5in}
\usepackage[top=\margins,bottom=\margins,left=\margins,right=\margins]{geometry}
\usepackage{amsmath}
\numberwithin{equation}{section}

\usepackage{ amscd, mathrsfs, dsfont}
\usepackage{amsmath,amssymb,amsthm,mathscinet}
\usepackage{enumerate}
\usepackage{easybmat,graphics}
\usepackage{etex, tikz, epic}
\usepackage{hyperref}
\hypersetup{colorlinks=true,citecolor=black,linkcolor=black}
\usepackage{etex}
\usepackage[all]{xy}
\usepackage[mathscr]{euscript}
\makeatletter
\newcommand\xleftrightarrow[2][]{%
  \ext@arrow 9999{\longleftrightarrowfill@}{#1}{#2}}
\newcommand\longleftrightarrowfill@{%
  \arrowfill@\leftarrow\relbar\rightarrow}
\makeatother

\newtheorem{Remark}[theorem]{Remark}

\def\depth{\operatorname{depth}}

\def\det{\operatorname{det}}

\def\WD{\operatorname{ WD}}

\def\ind{\operatorname{ ind\, }}
\def\Char{\operatorname{char}}
\def\cond{\operatorname{ cond }}

\def\Gal{\mathop{\rm Gal}\nolimits}
\def\GSp{\mathop{\rm GSp}\nolimits}

\def\GSpin{\mathop{\rm GSpin}\nolimits}

\def\SO{\mathop{\rm SO}\nolimits}
\def\GL{\mathop{\rm GL}\nolimits}
\def\Sym{\mathop{\rm Sym}\nolimits}

\def \Del{\mathop{\rm Del}\nolimits}

\def \Kaz{\mathop{\rm Kaz}\nolimits}

\numberwithin{equation}{section}

\numberwithin{equation}{section}

\newcommand{\calo}{\mathcal{O}}
\newcommand{\calp}{\mathfrak{p}}
\newcommand{\cala}{\mathfrak{a}}

\newcommand{\fH}{\mathscr{H}}
\newcommand{\fn}{\mathfrak{n}}

\newcommand{\bfm}{{\bf M}}
\newcommand{\bfb}{{\bf B}}
\newcommand{\bfg}{{\bf G}}

\newcommand{\bfp}{{\bf P}}
\newcommand{\bfn}{{\bf N}}

\newcommand{\bft}{{\bf T}}

\newcommand{\bfu}{{\bf U}}

\newcommand{\lu}{{\bf u}}

\newcommand{\Z}{\mathbb{Z}}
\newcommand{\C}{\mathbb{C}}
\newcommand{\bG}{\mathbb{G}}
\newcommand{\Q}{\mathbb{Q}}
\newcommand{\R}{\mathbb{R}}

\newcommand{\F}{\mathbb{F}}
\newcommand{\tw}{\tilde{w}}

\begin{document}

\title[Twisted exterior and symmetric square $\gamma$-factors]{On twisted exterior and symmetric square $\gamma$-factors}

\thanks{2010 \emph{Mathematics Subject Classification}. Primary 11F70, 11M38, 22E50, 22E55}

\author{Radhika Ganapathy}

\author{Luis Lomel\'i}


\begin{abstract}
We establish the existence and uniqueness of twisted exterior and symmetric square $\gamma$-factors in positive characteristic by studying the Siegel Levi case of generalized spinor groups. The corresponding theory in characteristic zero is due to Shahidi. In addition, in characteristic $p$ we prove that these twisted local factors are compatible with the local Langlands correspondence. As a consequence, still in characteristic $p$, we obtain a proof of the stability property of $\gamma$-factors under twists by highly ramified characters. Next we use the results on the compatibility of the Langlands-Shahidi local coefficients with the Deligne-Kazhdan theory over close local fields to show that the twisted symmetric and exterior square $\gamma$-factors, $L$-functions and $\varepsilon$-factors are preserved over close local fields.  Furthermore, we obtain a formula for Plancherel measures in terms of local factors and we also show that they also preserved over close local fields.
\end{abstract}

\maketitle

\section{Introduction}
We make a thorough study of the induction step of the Langlands-Shahidi method for the generalized spinor groups in positive characteristic, namely the case of a Siegel parabolic. Based on the results of Henniart-Lomel\'i \cite{HL11} and the study of the Langlands-Shahidi method over function fields begun in \cite{Lom09}, we prove the existence and uniqueness of twisted exterior and symmetric square $\gamma$-factors, $L$-functions and root numbers in positive characteristic. The theory in characteristic zero is entirely due to Shahidi \cite{Sha90, Sha97}.  We then use the local-to-global technique of \cite{HL11} to prove that our local twisted factors in positive characteristic are compatible with the local Langlands correspondence for $\GL_n$. Combining this with the results of \cite{DH81}, we obtain an imporant stability property of the twisted symmetric and exterior square $\gamma$-factors under twists by highly ramified characters.  Next we use the results of \cite{Gan13} on the compatibility of the Langlands-Shahidi local coefficients with the Deligne-Kazhdan theory to deduce that these twisted local factors are compatible with Kazhdan isomorphism \cite{Kaz86, How85, Lem01, Gan13} over sufficiently close local fields. The compatibility of the Artin factors with the Deligne isomorphism over close local fields is due to Deligne \cite{Del84}. We finally express the Plancherel measures as a product of $\gamma$-factors and deduce that they are also preserved over sufficiently close local fields.

In order to be more precise, we introduce classes $\mathscr{L}$ and $\mathscr{G}$ where twisted local factors are defined. We let $\mathscr{L}$ be the class whose objects are quadruples $(F,\pi,\eta,\psi)$ consisting of: a non-archimedean local field $F$, a smooth irreducible representation $\pi$ of ${\rm GL}_n(F)$; a multiplicative character $\eta$ of $F^\times$; and, a non-trivial additive character $\psi$ of $F$. Similarly, we let $\mathscr{G}$ be the class of quadruples $(F,\sigma,\eta,\psi)$, where $\sigma$ is now a complex $n$-dimensional ${\rm Frob}$-semisimple Weil-Deligne representation. Furthermore, given a prime $p$, we denote by $\mathscr{L}(p)$ and $\mathscr{G}(p)$ the subclasses of $\mathscr{L}$ and $\mathscr{G}$ having ${\rm char}(F) = p$.

We let ${\rm LLC}: \mathscr{L} \rightarrow \mathscr{G}$ be the morphism that to each $(F,\pi,\eta,\psi) \in \mathscr{L}$ assigns a quadruple $(F,\sigma,\eta,\psi) \in \mathscr{G}$ with $\sigma$ obtained from $\pi$ via the local Langlands correspondence for ${\rm GL}_n(F)$ \cite{HT01, Hen00, LRS93} and we view $\eta$ as a character of $\mathcal{W}_F$ via local class field theory. Let $r_0$ be either ${\rm Sym}^2$ or $\wedge^2$ and let $\gamma(s,\pi,r_0 \boxtimes \eta,\psi)$ denote the twisted exterior or symmetric square $\gamma$-factor. On the Galois side, $\gamma(s,r_0(\sigma)\otimes \eta,\psi)$ will be the Galois $\gamma$-factor defined by Langlands and Deligne \cite{t79}. Restricting ourselves to positive characteristic, in Theorem~4.3 we prove that the morphism
\begin{equation*}
\begin{CD}
   \mathscr{L}(p) @>{\rm LLC}>> 	\mathscr{G}(p)
\end{CD}
\end{equation*}
preserves twisted exterior and symmetric square local factors. In particular, we have
\begin{equation} \label{LLCcompatibility}
   \gamma(s,\pi,r_0 \boxtimes \eta,\psi) = \gamma(s,r_0(\sigma)\otimes\eta,\psi).
\end{equation}
We remark that, for non-archimedean local fields of characteristic 0, Henniart proved that the above equality holds up to a root of unity  in \cite{Hen2010}. It is the aim of current work of Cogdell, Shahidi and Tsai \cite{CST}, still in characteristic 0, to prove the Equality~\eqref{LLCcompatibility}. 

An imporant consequence of Equation \eqref{LLCcompatibility} is  a stability property of $\gamma$-factors under twists by highly ramified characters; indeed, let $(F,\pi_i,\eta,\psi) \in \mathscr{L}(p)$, $i = 1$, $2$, be such that their central characters $\omega_{\pi_i}$ are equal and $\eta$ is sufficiently ramified. Then, Proposition \ref{Stability} states that
\begin{equation} \label{stabilityintro}
   \gamma(s,\pi_1,r_0 \boxtimes \eta,\psi) = \gamma(s,\pi_2,r_0 \boxtimes \eta,\psi).
\end{equation}
We remark that, in the case of non-archimedean local fields of characteristic 0, Cogdell, Shahidi and Tsai work directly with the Langlands-Shahidi local coefficient and its connection to Bessel models in \cite{CST}. They directly prove the above stability property of $\gamma$-factors, and from Equation~\eqref{stabilityintro} they are able to establish Equation~\eqref{LLCcompatibility}, contrary to our proof in the characteristic $p$ case.

Having a solid theory of twisted local factors over non-archimedean local fields in hand, now including the case of characteristic $p$, allows us incorporate the results of \cite{Gan12, Gan13} on the compatibility of the Langlands-Shahidi local coefficients with the Deligne-Kazhdan correspondence \cite{Del84, Kaz86} on the representation theory over close local fields. Recall that two non-archimedean local fields $F$ and $F'$ are \emph{m-close} if $\calo_F/\calp_F^m \cong \calo_{F'}/\calp_{F'}^m$. For example, the fields $\F_p(\!(t)\!)$ and $\Q_p(p^{1/m})$ are $m$-close. More generally, a local field of positive characteristic can be approximated with non-archimedean local fields of characteristic 0. In \cite{Del84}, Deligne proved that when the fields $F$ and $F'$ are $m$-close, the absolute Galois groups of $F$ and $F'$ modulo their respective $m$-th higher ramification subgroups with upper numbering become isomorphic, thereby giving a bijection 
\begin{equation}\label{Delbijection}\{\sigma: \WD_F \rightarrow \GL_n(\C)| \depth(\sigma) < m\} \leftrightarrow \{\sigma': \WD_{F'} \rightarrow \GL_n(\C)| \depth(\sigma') < m\}.
\end{equation}
Now, let $\mathscr{G}^m$ be the subclass of $\mathscr{G}$ consisting of quadruples $(F,\sigma,\eta,\psi)$ with $\depth(\sigma)< m$ and $\depth(\eta) < m$.   
We say that that the quadruple $(F, \sigma, \eta, \psi) \in \mathscr{G}^m$  is $\Del_m$-associated to $(F', \sigma', \eta', \psi')$ if: $F$ and $F'$ are $m$-close, $\sigma \leftrightarrow \sigma'$ and $\eta \leftrightarrow \eta'$ via Equation \eqref{Delbijection}, and $\psi$ is compatible with $\psi'$ (see Definition \ref{DelAsso}). A consequence of Proposition 3.7.1 of \cite{Del84} is that for each pair of quadruples 
\begin{equation*}
(F, \sigma, \eta, \psi) \xleftrightarrow{\Del_m\text{-associated}} ( F', \sigma', \eta', \psi')
\end{equation*}
one has
\begin{equation}\label{Introgamma}
\gamma(s,(r_0 \circ \sigma) \otimes \eta,\psi)  = \gamma(s,(r_0 \circ \sigma') \otimes \eta',\psi').
\end{equation}
The above equality also holds true for the Artin $L$-functions and $\epsilon$-factors. 

For split reductive groups $\bfg/\Z$, the Hecke algebra isomorphism over close local fields due to Kazhdan \cite{Kaz86} (its variant can be found in \cite{How85, Lem01} for $\GL_n$ and \cite{Gan13} for any split reductive group) gives a bijection between
\begin{align}\label{Kazbijection}
\{&\text{Irreducible admissible repns. } \pi \text{ of } \bfg(F)\;|\; \depth(\pi) < m\} \\\nonumber
&\overset{\kappa_m}\longleftrightarrow \{\text{Irreducible admissible repns. } \pi' \text{ of } \bfg(F')\;| \;\depth(\pi') < m\}
\end{align}
when the fields $F$ and $F'$ are $(m+1)$-close. In \cite{Gan13}, it was shown that when the fields $F$ and $F'$ are sufficiently close, the Langlands-Shahidi local coefficients are preserved for generic representations of Levi subgroups that correspond via Equation \eqref{Kazbijection}. We apply this result to deduce the analogue of Equation \eqref{Introgamma} on the analytic side for our twisted $\gamma$-factors (and also $L$-functions and root numbers).

More precisely, given $m \geq 1$, let $\mathscr{L}^m$ be the subclass of $\mathscr{L}$ whose objects consist of quadruples $(F,\pi,\eta,\psi)$ with ${\rm depth}(\pi) < m$ and ${\rm depth}(\eta)< m$. For $l \geq m+1$, we say that the quadruple $(F,\pi,\eta,\psi) \in \mathscr{L}^m$ is $\Kaz_l$-associated to $(F',\pi',\eta',\psi')$ if: $F$ and $F'$ are $l$-close, $\pi \leftrightarrow \pi'$ and $\eta \leftrightarrow \eta'$ via $\kappa_l$ of Equation \ref{Kazbijection}, and $\psi$ and $\psi'$ are compatible (See Definition \ref{KazAsso}). We deduce that there exists $l = l(n,m) \geq m+1$, depending only on $n$ and $m$, such that for each pair of quadruples 
\begin{equation}\label{KazIntroAsso}
(F, \pi, \eta, \psi) \xleftrightarrow{\Kaz_l\text{-associated}} ( F', \pi', \eta', \psi')
\end{equation}
one has
\begin{align*}
\gamma(s, \pi, r_0 \boxtimes \eta, \psi) &= \gamma(s, \pi', r_0 \boxtimes \eta', \psi').
\end{align*} 
In addition to the $\gamma$-factors, the twisted symmetric and exterior square $L$-functions and root numbers are also preserved by this transfer (see Proposition~\ref{Kazhdantwisted}).

Plancherel measures play an important role in the recent results on the local Langlands correspondence for ${\rm GSp}_4$ \cite{GT11, Gan13}. With respect to Plancherel measures associated to representations of Siegel Levi subgroups of spinor groups, they can be expressed in terms of $\gamma$-factors. This we do regardless of characteristic in Proposition~\ref{Plancherelgamma}, where we follow Shahidi \cite{Sha90}, who completely covered the characteristic zero case. We then deduce that for a pair of quadruples as in Equation \eqref{KazIntroAsso}, the Plancherel measures are also equal (see Corollary~\ref{Planchereltransfer}).

\subsection*{Acknowledgments} First of all, we are indebted to G. Henniart for his encouragement on pursuing this line of research. Many of the results of this article are possible thanks to his work and the Henniart-Lomel\'i collaboration.  The results in this article on the transfer of the twisted local factors over close local fields are consequences of the first author's Ph.D thesis project; she would like express her most sincere gratitude to her advisor Jiu-Kang Yu for suggesting her thesis project and for the numerous valuable discussions. We would like to thank F. Shahidi, to whom the theory of twisted exterior and symmetric square local factors in characteristic zero is due. In addition, we thank M. Asgari for taking the time to answer basic questions concerning generalized spinor groups. We also profited from fruitful mathematical discussions with Yeansu Kim, A. Roche, R. Schmidt and Sandeep Varma. The first author would like to thank the Institute for Advanced Study for providing a great working environment during the academic year 2012-2013 where some parts of this article were written.  The second author would like to thank the University of Oklahoma, in particular the Algebra and Representation Theory group, for providing a good professional environment while work on this paper was made.

\section{Generalized spinor groups}

\subsection{Special orthogonal groups}\label{qsSO} Given a commutative ring $R$ with identity, let $q$ be the quadratic form on $V = R^{2n}$ defined by
\begin{equation*}
   q(x) = \sum_{i=1}^{n} x_i x_{2n+1-i}, \ x \in R^{2n}.
\end{equation*}
Then ${\rm O}_{2n}(R) = \left\{ g \in {\rm GL}_{2n}(R) \, \vert \, q(gx) = q(x), \forall x \in R^{2n} \right\}$. Let $D_q: {\rm O}_{2n}(R) \rightarrow \mathbb{Z}_2(R)$ be the Dickson invariant (see \cite{knus,kmrt}). Then, ${\rm SO}_{2n}(R)$ is defined to be the kernel of $D_q$.

In contrast to even special orthogonal groups, which require special attention in characteristic two, odd special orthogonal groups ${\rm SO}_{2n+1}$ are defined via the usual determinant. This gives a smooth connected group scheme regardless of characteristic. We use the quadratic form

\begin{equation*}
   q(x) = \sum_{i=1}^n x_i x_{2n+2-i} + x_{n+1}^2, \ x \in R^{2n+1},
\end{equation*}
just as in sections~3.2--3.3 of \cite{Lom09}.

\subsection{Generalized spinor groups and their structure theory:} With a quadratic space $(V,q)$ as above, the generalized spinor group $\GSpin_m, m=2n+1$ or $2n$ can be realized as the central extension of $\SO(q)$ by $\mathbb{G}_m$ (Refer Appendix C of \cite{conrad} for details). This holds true regardless of characteristic. Note that ${\rm GSpin}_1\simeq {\rm GL}_1$.

Let us recall the structure theory of split $\GSpin$ groups from \cite{AS06}.  From now on, let $\bfg = \GSpin_m,\; m=2n+1$ or $2n,$ be the split connected reductive group over $\Z$.  Let 
\begin{align*}
X &:= \Z e_0 \oplus \Z e_1 \oplus \cdots \oplus\Z e_n,\\
 X^\vee &:= \Z e_0^* \oplus \Z e_1^* \oplus \cdots \oplus \Z e_n^*,
\end{align*}
and let $\langle\;, \; \rangle$ denote the standard $\Z$-pairing on $X \times X^\vee$. Then the root datum of $\GSpin_m$ can be described as $(X, \Phi, X^\vee, \Phi^\vee)$, with $\Phi$ and $\Phi^\vee$ generated, respectively, by 
\begin{align*}
\Delta&=\{\alpha_1= e_1-e_2, \ldots, \alpha_{n-1}= e_{n-1} -e_n, \alpha_n= e_n\}, \\
\Delta^\vee &=\{ \alpha_1^{\vee}= e_1^*-e_2^*, \ldots, \alpha_{n-1}^\vee =e_{n-1}^* -e_n^*, \alpha_n^\vee= 2e_n^* - e_0^*\},
\end{align*}
when $m = 2n+1$ and by
\begin{align*}
\Delta&=\{\alpha_1= e_1-e_2, \ldots, \alpha_{n-1}= e_{n-1} -e_n, \alpha_n= e_{n-1}+e_n\}, \\
\Delta^\vee &=\{ \alpha_1^{\vee}= e_1^*-e_2^*, \ldots, \alpha_{n-1}^\vee= e_{n-1}^* -e_n^*, \alpha_n^\vee= e_{n-1}^* + e_n^* - e_0^*\},
\end{align*}
when $m=2n$.  We fix a Chevalley basis $\{\lu_{\alpha}:\bG_a \rightarrow \bfu_\alpha|\alpha \in \Phi\}$ for $\bfg$, where $\bfu_\alpha$ the root subgroup associated to $\alpha \in \Phi$. Let $\bfb = \bft\bfu$ denote the Borel subgroup of $\bfg$ that determines the  positive roots $\Phi^+$ and $\Delta$ described above. Let $W$ denote the Weyl group of $\bfg$ and let $w_{l,\Delta}$ denote the longest element of $W$.  For each $w \in W$ we fix a reduced expression for $w$ and we always choose its representative $\tw$ using the Chevalley basis fixed above. Note that this representative is independent of the choice of reduced expression of $w$. 

Let $\theta: = \Delta \backslash \{\alpha_n\}$. Then $\theta$ determines the standard Siegel Levi subgroup $\bfm \cong \GL_n \times \GL_1$ of $\bfg$. Let $W_\theta$ denote the Weyl group of $\bfm$ and let $w_{l, \theta}$ denote the longest element of $W_\theta$. Set $w_0 = w_{l, \Delta}w_{l, \theta}$. Note that $w_0 \bfm w_0^{-1} = \bfm$, that is, the Siegel Levi subgroup is self-associate in $\bfg$. Let $\bfp = \bfm\bfn$ be the standard parabolic subgroup of $\bfg$ with $\bfn \subset \bfu$. Let $X^*(\bfm)$ denote the set of rational characters of $\bfm$. Let $\cala^*: = X^*(\bfm) \otimes_\Z \R$, $\cala$ its dual, and $\cala_{ \C}^*$ its complexification. Let $\rho$ denote the half-sum of the roots that generate $\bfn$ and let $\tilde{\alpha}_n = \langle \rho, \alpha_n^\vee \rangle^{-1} \rho$.

Consider the adjoint action $r$ of $^L\bfm$, the $L$-group of $\bfm$, on the Lie algebra $^L\fn$, the Lie algebra of the $L$-group of $\bfn$. By Proposition 5.6 of \cite{Asg02}, we know that
\begin{align} \label{adjointr}
r &=
  \begin{cases}
   \Sym^2 \boxtimes \, {\rm sim}^{-1}       & \text{if } \bfg = \GSpin_{2n+1} \\
   \wedge^2 \boxtimes {\rm sim}^{-1}       & \text{if } \bfg = \GSpin_{2n}
  \end{cases}.
\end{align}
The representations of Siegel Levi subgroups of split $\GSpin$ groups give rise to the theory of twisted symmetric and exterior square $\gamma$-factors to be studied in this article. 

\subsection{} Given a non-archimedean local field $F$, let $\mathcal{O}_F$ be its ring of integers, $\mathfrak{p}_F$ its maximal ideal, $q_F$ the cardinality of its residue field, and $\varpi_F$ a uniformizer. When working over a fixed $F$, we sometimes drop the subscripts when there is no possibility of confusion. Given a global field $k$ and a non-archimedean valuation $v$ of $k$, we write $\mathcal{O}_v$ for the ring of integers of $k_v$; and similarly for $\mathfrak{p}_v$, $q_v$ and $\varpi_v$. Given a representation $\tau$, we let $\tau^{\vee}$ denote its contragredient representation.
For an algebraic group ${\bf H}$ defined over $\Z$, we let $H = {\bf H}(F)$. We then have $G, B, T, U$ and so on.

When the group is ${\bf G} = {\rm GL}_n$, we write ${\bf B}_n = {\bf T}_n{\bf U}_n$ for the Borel subgroup of upper triangular matrices with maximal torus ${\bf T}_n$ and unipotent radical ${\bf U}_n$. The standard representation of ${\rm GL}_n(\mathbb{C})$ is denoted by $\rho_n$.

\subsection{Haar measures} \label{Haar} Normalization of Haar measures is done in such a way that the theory matches all the way to the basic semisimple rank 1 cases. In particular, we have compatibility with Tate's thesis. For this, we fix a non-archimedean local field $F$ and a non-trivial character $\psi$ of $F$. We let $\mu_\psi$ be the self dual Haar measure corresponding to $\psi$. That is, $\mu_\psi$ is chosen so that Fourier inversion holds (see for example Equation~(1.1) of \cite{Lom12b}). The measure on the set of rational points $N$ of the unipotent radical of the Siegel parabolic subgroup $\bf P$ is obtained by taking a product of measures $\mu_\psi$ on each of the corresponding one parameter subgroups $U_\alpha$. Let $I$ be the standard Iwahori subgroup of $G$ (see \S~4 of \cite{Gan13}). We then have
\begin{equation*}
   {\rm vol}(N \cap I, dn) = \mu_\psi(\mathcal{O}_F)^{\frac{n(n\pm1)}{2}},
\end{equation*}
where we use $+$ for odd spinor groups and we use $-$ for even spinor groups.

\subsection{Local coefficients}\label{LCPM} Let us briefly recall the theory of local coefficients and Plancherel measures from \cite{Sha81, Sha90} in the context of the Siegel Levi subgroups described above. 
Fix a non-trivial additive character $\psi$ of $F$. Let $\chi$ be the generic character of $U$ defined by
\[\chi = \displaystyle{\prod_{\alpha \in \Delta}} \psi \circ \lu_{\alpha}^{-1}.\]
Note that since the representatives of Weyl group elements are chosen using the same Chevalley basis, the character $\chi$ is compatible with $\tw_0$, that is, 
\[\chi(u) = \chi(\tw_0u\tw_0^{-1}) \;\forall\; u \in U \cap M.\]
Let $\chi_M$ denote the restriction of $\chi$ to $U \cap M$.   Let $\pi$ be an irreducible, admissible, generic representation of $\GL_n(F)$, $\eta$ a character of $F^\times$ and consider $\pi \boxtimes \eta^{-1}$ as a representation of $M$.  Since the center of $\bfm$ is connected, we see that $\pi \boxtimes \eta^{-1}$ is $\chi_M$-generic.  Let $H_M: M \rightarrow \cala$ be defined by
\[ q_F^{\langle \nu, H_M(m)\rangle} = |\nu(m)|_F,\; m \in M, \nu \in \cala^*.\]
 For each $s \in \C$, let
\[I(s, \pi \boxtimes \eta^{-1}) := \ind_P^G ((\pi \boxtimes \eta^{-1}) \boxtimes q^{\langle s\tilde{\alpha}_n+\rho, H_M(-)\rangle})\]
denote the normalized parabolically induced representation. Then this induced representation is $\chi$-generic  and let $\lambda_\chi(s, \pi \boxtimes \eta^{-1})$ be the Whittaker functional as in Proposition 3.1 of \cite{Sha81}.  Also let $w_0(\pi \boxtimes \eta^{-1})$ denote the representation of $M$ given by $w_0(\pi \boxtimes \eta^{-1})(m) = (\pi \boxtimes \eta^{-1})(\tw_0 m \tw_0^{-1})$  and let
\[A(s, \pi \boxtimes \eta^{-1}, \tw_0): I(s, \pi \boxtimes \eta^{-1}) \rightarrow  I(-s, w_0(\pi \boxtimes \eta^{-1}))\]
denote the standard intertwining operator as in \cite{Sha81}. The local coefficient $C_\chi(s, \pi \boxtimes \eta^{-1}, \tw_0)$ arises from the uniqueness of the Whittaker models of the induced representations, that is, it satisfies the equation
\begin{equation} \label{LScoefficient}
\lambda_\chi(s, \pi \boxtimes \eta^{-1}) = C_\chi(s, \pi \boxtimes \eta^{-1}, \tw_0)\lambda_\chi(-s, w_0(\pi \boxtimes \eta^{-1})) \circ A(s, \pi \boxtimes \eta^{-1}, \tw_0).
\end{equation}
This local coefficient is a meromorphic function of $s$ and, in fact, is a rational function of $q^{-s}$. It gives rise to the theory of twisted symmetric and exterior square $L$- and $\epsilon$-factors in the Langlands-Shahidi method, to be studied in the next section. 

 Unlike the local coefficient and the theory of $\gamma$-factors in the Langlands-Shahidi method that is available only for generic representations, the Plancherel measure is defined for any irreducible, admissble representation of $M$.  The Plancherel measure can be viewed as a coarser invariant than the $\gamma$-factor, in the sense that when the representation is additionally generic, Shahidi proved in characteristic 0 that the Plancherel measure can be expressed as a certain product of local $\gamma$-factors. The quantity played an important role in the Gan-Takeda characterization \cite{GT11} of the local Langlands correspondence for non-generic supercuspidal representations of $\GSp_4(F)$ where $F$ is a non-archimedean local field of characteristic 0 and consequently also appeared as part of the corresponding result over local function fields in \cite{Gan13}. 

Let us briefly recall the definition of the Plancherel measure. Let
\[\gamma_{w_0}(G/P) = \int_{\bar{N}} q_F^{\langle 2\rho, H_M(\bar{n}) \rangle}d\bar{n},\]
where $d\bar{n}$ is the Haar measure fixed in \ref{Haar}, $H_{M}$ is defined on $M$ as above and extended to a function on $G$ by letting $H_M(mnk) = H_M(m), m \in M, n \in N$ and $k \in \bfg(\calo)$ (cf. Section 2 of \cite{Sha90}). 
 There exists a constant $\mu(s, \pi \boxtimes \eta^{-1}, w_0)$ which depends only on $s$,  the class of $\sigma$ and on $w_0$, but not on the choice $\tw_0$, such that
\begin{equation}
 A(s, \pi \boxtimes \eta^{-1}, \tw_0) \circ A(-s, w_0(\pi \boxtimes \eta^{-1}), \tw_0^{-1}) = \mu(s, \pi \boxtimes \eta^{-1}, w_0)^{-1}\gamma_{w_0}(G/P)^2 
\end{equation}
The scalar $\mu(s, \pi \boxtimes \eta^{-1}, w_0)$ is a meromorphic function of $s$ and is called the Plancherel measure associated to $s, \pi \boxtimes \eta^{-1}$ and $w_0$.

\section{Twisted exterior square and symmetric square factors}\label{TwistedfactorsFunctionfields}

We now work with the Langlands-Shahidi method for generalized spin groups. In particular, we study the case of a Siegel parabolic to obtain twisted exterior and symmetric square $\gamma$-factors. In characteristic zero, the theory is due to Shahidi \cite{Sha90,Sha97}. In Theorem~\ref{gammathm} below, which addresses the characteristic $p$ case, we extend the results of \cite{HL11} on exterior and symmetric square $\gamma$-factors. The existence part is possible via the results of \cite{Lom09} over function fields and the uniqueness argument is due to Henniart-Lomel\'i, thanks to the local-global technique for ${\rm GL}_n$ found in \cite{HL11,HL13}.

\subsection{Theory of $\gamma$-factors}Let $\mathscr{L}$ be the class whose objects are quadruples $(F,\pi,\eta,\psi)$ consisting of: a non-archimedean local field $F$, a smooth irreducible representation $\pi$ of ${\rm GL}_n(F)$; a multiplicative character $\eta$ of $F^\times$; and, a non-trivial additive character $\psi$ of $F$. We call a quadruple $(F,\pi,\eta,\psi) \in \mathscr{L}$ generic (resp. supercuspidal, tempered, spherical) when the representation $\pi$ of ${\rm GL}_n(F)$ is generic (resp. supercuspidal, tempered, spherical); we say $(F,\pi,\eta,\psi)$ is of degree $n$. When $F$ is of characteristic $p>0$, we write $\mathscr{L}(p)$ to denote the subclass of $\mathscr{L}$ consisting of quadruples $(F,\pi,\eta,\psi)$ with ${\rm char}(F) = p$.

Let $\mathscr{LS}$ be the class of quintuples $(k,\pi,\eta,\psi,S)$ consisting of: a global field $k$; a cuspidal automorphic representation $\pi = \otimes \pi_v$ of ${\rm GL}_n(\mathbb{A}_k)$; a Gr\"ossencharakter $\eta = \otimes \eta_v$ of $\mathbb{A}_k^\times$; a non-trivial character $\psi = \otimes \psi_v$ of $\mathbb{A}_k/k$; and, a finite set of places of $k$ such that $\pi_v$, $\eta_v$ and $\psi_v$ are unramified for $v \notin S$. We focus on the case of a function field $k$ with case ${\rm char}(k) = p$, where we write $\mathscr{LS}(p)$.

\begin{theorem} \label{gammathm}Let $r_0$ be either ${\rm Sym}^2$ or $r = \wedge^2$. There {\bf exists} a rule $\gamma$ on $\mathscr{L}(p)$ that assigns a rational function $\gamma(s,\pi,r_0 \boxtimes \eta,\psi) \in \mathbb{C}(q_F^{-s})$ to every $(F,\pi,\eta,\psi) \in \mathscr{L}(p)$, which is {\bf uniquely} characterized by the following properties:

\begin{enumerate}
\item[(i)] (Naturality). Let $(F,\pi,\eta,\psi) \in \mathscr{L}(p)$, and let $\iota: F' \rightarrow F$ be an isomorphism of locally compact fields. Let $(F',\pi',\eta',\psi') \in \mathscr{L}(p)$ be the quadruple obtained from $(F,\pi,\eta,\psi)$ via $\iota$. Then
\begin{equation*}
   \gamma(s,\pi,r_0 \boxtimes \eta,\psi) = \gamma(s,\pi',r_0 \boxtimes \eta',\psi').
\end{equation*}

\item[(ii)] (Isomorphism). Let $(F,\pi,\eta,\psi) \in \mathscr{L}(p)$, and let $(F,\pi',\eta,\psi) \in \mathscr{L}(p)$ be such that $\pi' \simeq \pi$. Then
\begin{equation*}
   \gamma(s,\pi',r_0 \boxtimes \eta,\psi) = \gamma(s,\pi,r_0 \boxtimes \eta,\psi).
\end{equation*}

\item[(iii)](Relation with Artin factors). Let $(F,\pi,\eta,\psi) \in \mathscr{L}(p)$ be spherical. Let $\sigma$ be the Weil-Deligne representation corresponding to $\pi$ via the local Langlands correspondence; and, identify $\eta$ with a character of $\mathcal{W}_F$ via local class field theory. Then
\begin{equation*}
   \gamma(s,\pi, r_0 \boxtimes \eta, \psi) = \varepsilon(s, (r_0 \circ \sigma) \otimes \eta, \psi) 
    \frac{L(1-s, (r_0 \circ \sigma^\vee) \otimes \eta^{-1})}{L(s, (r_0 \circ \sigma) \otimes \eta)}.
\end{equation*}

\item[(iv)] (Dependence on $\psi$). Let $(F,\pi,\eta,\psi) \in \mathscr{L}(p)$ be of degree $n$. Given $a \in F^\times$, let $\psi^a: F \rightarrow \mathbb{C}^\times$, $x \mapsto \psi(ax)$. Then
\begin{equation*}
   \gamma(s,\pi,r_0 \boxtimes \eta,\psi^a) = \omega_\pi(a)^{n \pm 1} \eta(a)^{\frac{n \pm 1}{2}} \left| a \right|_F^{\frac{n(n \pm 1)}{2} (s-\frac{1}{2})} \gamma(s,\pi,r_0 \boxtimes \eta,\psi),
\end{equation*}
where $\omega_\pi$ is the central character of $\pi$ (we use $+$ for ${\rm Sym}^2 \boxtimes \eta$ and $-$ for $\wedge^2 \boxtimes \eta$).

\item[(v)] (Multiplicativity). For $i=1,\ldots,d$, let $(F,\pi_i,\eta,\psi) \in \mathscr{L}(p)$. Let $\pi$ be an irreducible subquotient of the representation of ${\rm GL}_n(F)$ parabolically induced from $\pi_1 \boxtimes \cdots \boxtimes \pi_d$. Assume that either: $\rm (a)$ $\pi$ is generic, or $\rm (b)$ all of the $\pi_i's$ are quasi-tempered and $\pi$ is the Langlands quotient of the parabolically induced representation. Then
\begin{equation*}
   \gamma(s,\pi,r_0 \boxtimes \eta,\psi) = 
   \prod_{i=1}^{d} \gamma(s, \pi_i, r_0 \boxtimes \eta, \psi)
   \prod_{i<j} \gamma(s,\pi_i \times (\pi_j \cdot \eta),\psi),
\end{equation*}
where each $\gamma(s,\pi_i \times (\pi_j \cdot \eta),\psi)$ is a Rankin-Selberg $\gamma$-factor.

\item[(vi)] (Twists by unramified characters). Let $(F,\pi,\eta,\psi) \in \mathscr{L}(p)$, then
\begin{equation*}
   \gamma(s + s_0,\pi,r_0 \boxtimes \eta,\psi) = \gamma(s,\left| \det(\cdot) \right|_F^{\frac{s_0}{2}} \pi,r_0 \boxtimes \eta,\psi).
\end{equation*}

\item[(vii)] (Global functional equation). Let $(k,\pi,\eta,\psi,S) \in \mathscr{LS}(p)$. Then
\begin{equation*}
L^S(s,\pi,r_0 \boxtimes \eta) = \prod_{v \in S} \gamma(s,\pi_v, r_0 \boxtimes \eta_v, \psi_v) \, L^S(1-s,\pi^\vee,r_0 \boxtimes \eta^{-1}).
\end{equation*}

\end{enumerate}
\end{theorem}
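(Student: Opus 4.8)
The plan is to follow the standard Langlands--Shahidi local-to-global strategy, adapted to positive characteristic as in \cite{HL11, Lom09}, treating existence and uniqueness separately. For \textbf{existence}, I would first construct the $\gamma$-factor for supercuspidal generic $\pi$ by realizing it through the Langlands--Shahidi local coefficient of Section~\ref{LCPM}: given $(F,\pi,\eta,\psi)\in\mathscr{L}(p)$ with $\pi$ supercuspidal, form the representation $\pi\boxtimes\eta^{-1}$ of the Siegel Levi $\bfm\cong\GL_n\times\GL_1$ of $\bfg=\GSpin_m$ (with $m=2n+1$ for $\Sym^2$ and $m=2n$ for $\wedge^2$, per Equation~\eqref{adjointr}), and \emph{define} $\gamma(s,\pi,r_0\boxtimes\eta,\psi):=C_\chi(s,\pi\boxtimes\eta^{-1},\tw_0)$ up to the normalizing factor $\gamma_{w_0}(G/P)$ and a suitable power of $q_F^{-s}$, exactly as Shahidi does in characteristic zero \cite{Sha90}. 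Properties (i), (ii), (iv), (vi) are then essentially formal consequences of the construction of the local coefficient --- naturality and isomorphism-invariance because the coefficient is intrinsic, the $\psi^a$-dependence from tracking how $\chi$ and the Haar measure of $N$ (computed in Section~\ref{Haar} as $\mu_\psi(\calo_F)^{n(n\pm1)/2}$) scale under $a$, and the unramified twist from the identification of $s$-shifts with twisting $\pi$ by $|\det|_F^{s_0/2}$. To pass from supercuspidal $\pi$ to arbitrary generic (and then Langlands-quotient) $\pi$, I would \emph{impose} multiplicativity (v) as the definition on induced representations, which forces the formula in terms of lower-rank twisted factors and Rankin--Selberg $\gamma$-factors $\gamma(s,\pi_i\times(\pi_j\cdot\eta),\psi)$; one must check this is well-defined (independent of the chosen inducing data), which reduces to known compatibilities of intertwining operators and the inductive behavior of the local coefficient, together with the analogous multiplicativity for Rankin--Selberg factors.

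The \textbf{global functional equation} (vii) is obtained by globalizing: given $(k,\pi,\eta,\psi,S)\in\mathscr{LS}(p)$, one runs the crude functional equation of the Langlands--Shahidi method for $\GSpin_m$ over the function field $k$ --- this is where the results of \cite{Lom09} on the Langlands--Shahidi method over function fields are invoked --- which produces an identity $L^S(s,\pi,r_0\boxtimes\eta)=\bigl(\prod_{v\in S}\gamma_v\bigr)L^S(1-s,\pi^\vee,r_0\boxtimes\eta^{-1})$ where $\gamma_v$ is the local coefficient at $v$ (normalized by the local $\gamma_{w_0}(G/P)$ factors, which collapse because $\pi_v,\eta_v,\psi_v$ are unramified outside $S$ and the partial $L$-function absorbs the good places). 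Matching $\gamma_v$ with $\gamma(s,\pi_v,r_0\boxtimes\eta_v,\psi_v)$ at the bad places is precisely consistent with the local definition above, giving (vii). Property (iii), the relation with Artin factors at spherical places, follows from the unramified computation of the local coefficient (a rank-one reduction and the Gindikin--Karpelevich formula) matched against the explicit unramified Galois $\gamma$-factor $\varepsilon(s,(r_0\circ\sigma)\otimes\eta,\psi)L(1-s,(r_0\circ\sigma^\vee)\otimes\eta^{-1})/L(s,(r_0\circ\sigma)\otimes\eta)$, using that $r_0$ is realized as the adjoint action $r$ in Equation~\eqref{adjointr} and that for unramified data the Satake parameter of $\pi$ maps to $\sigma$ via LLC.

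For \textbf{uniqueness}, I would argue that properties (i)--(vii) pin down $\gamma$ completely by the following reduction. First, multiplicativity (v) reduces the problem to determining $\gamma(s,\pi,r_0\boxtimes\eta,\psi)$ for $\pi$ supercuspidal. Then a global argument in the spirit of \cite{HL11, HL13}: embed the local supercuspidal datum $(F,\pi,\eta,\psi)$ into a global datum $(k,\Pi,\mathcal{H},\psi,S)\in\mathscr{LS}(p)$ with $\Pi_{v_0}=\pi$ at one place $v_0$ and with $\Pi_v$ controlled (e.g. unramified or of a prescribed simple type) at all other places of $S$ --- this uses the existence of cuspidal automorphic representations of $\GL_n(\A_k)$ with prescribed local components, available over function fields via Poincaré series / the simple trace formula as in \cite{HL11}. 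The global functional equation (vii) then expresses $\prod_{v\in S}\gamma(s,\Pi_v,r_0\boxtimes\mathcal{H}_v,\psi_v)$ as a ratio of partial $L$-functions that is independent of the rule $\gamma$; combined with property (iii) determining the factors at the auxiliary unramified places and an induction on $n$ (via (v)) handling any non-supercuspidal auxiliary places, one isolates $\gamma(s,\pi,r_0\boxtimes\eta,\psi)$ as a well-defined quotient, proving two rules satisfying (i)--(vii) must agree. \textbf{The main obstacle} I anticipate is precisely this globalization step in positive characteristic: one needs a sufficiently flexible existence theorem for cuspidal automorphic representations of $\GL_n$ over function fields with prescribed (in particular, prescribed supercuspidal) local components at a controlled finite set of places, together with the verification that the Langlands--Shahidi machinery of \cite{Lom09} --- the crude functional equation, meromorphic continuation, and the unramified computation --- is available in the needed generality for $\GSpin_m$; all the purely local manipulations (properties (iv), (vi), and well-definedness of (v)) are routine by comparison, and the subtlety is making the local-to-global bridge carry the uniqueness.
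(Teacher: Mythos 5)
Your proposal follows essentially the same route as the paper: the $\gamma$-factor is defined through the Langlands--Shahidi local coefficient for the Siegel Levi of $\GSpin_m$ using the function-field results of \cite{Lom09} (with the crude functional equation giving (vii) and multiplicativity reducing (iii) to abelian Tate factors), and uniqueness is obtained via the \cite{HL11,HL13} globalization of a supercuspidal $\pi$ to a cuspidal automorphic $\xi$ whose components away from the distinguished place are subquotients of principal series, so that the global functional equation isolates the local factor there. The only organizational difference is that the paper defines $\gamma$ directly via the local coefficient for \emph{all} generic $\pi$ (so multiplicativity is inherited from \cite{Lom09} rather than imposed, avoiding your well-definedness check) and invokes Grunwald--Wang to globalize the character $\eta$.
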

\begin{Remark} \label{partialL} Given $(k,\pi,\eta,\psi,S) \in \mathscr{LS}(p)$ and $v \notin S$, the representation $\pi_v$ is of the form
\begin{equation*}
   \pi_v \hookrightarrow {\rm ind}_{B_n}^{{\rm GL}_n(k_v)}(\chi_{1,v} \boxtimes \cdots \boxtimes \chi_{n,v}),
\end{equation*}
where $\chi_{1,v}$, \ldots, $\chi_{n,v}$ are unramified characters of $k_v^\times$. The corresponding spherical local $L$-functions for $v \notin S$ are defined by
\begin{equation*}
   L(s,\pi_v,{\rm Sym}^2 \boxtimes \eta_v) = \prod_{i \leq j} \dfrac{1}{1 - \chi_{i,v}(\varpi_v) \chi_{j,v}(\varpi_v) \eta_v(\varpi_v) q_v^{-s}}
\end{equation*}
and
\begin{equation*}
   L(s,\pi_v,\wedge^2 \boxtimes \eta_v) = \prod_{i < j} \dfrac{1}{1 - \chi_{i,v}(\varpi_v) \chi_{j,v}(\varpi_v) \eta_v(\varpi_v) q_v^{-s}}.
\end{equation*}
The partial $L$-functions appearing in the global functional equation are then given by
\begin{equation*}
   L^S(s,\pi,r_0 \boxtimes \eta) = \prod_{v \notin S} L(s,\pi_v, r_0 \boxtimes \eta_v).
\end{equation*}
\end{Remark}

\subsection{$L$-functions and $\varepsilon$-factors}\label{LEpsilonFactors}

We now obtain a system of $L$-functions and $\varepsilon$-factors from the system of $\gamma$-factors on $\mathscr{L}$ defined in \S~3. First for tempered $(F,\pi,\eta,\psi) \in \mathscr{L}$ and then in general via the Langlands parametrization and analytic continuation. We focus on the case of characteristic $p$; all of the corresponding results when ${\rm char}(F) = 0$ are due to Shahidi \cite{Sha90}. We begin with the local functional equation of $\gamma$-factors:

\begin{Corollary} (Local functional equation). Let $(F,\pi,\eta,\psi) \in \mathscr{L}(p)$, then
   \begin{equation*}
      \gamma(s,\pi,r_0 \boxtimes \eta,\psi) \, \gamma(1-s,\pi^\vee,r_0 \boxtimes \eta^{-1},\overline{\psi}) = 1.
   \end{equation*}
   \end{Corollary}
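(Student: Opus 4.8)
The plan is to deduce the local functional equation purely formally from the characterizing properties of $\gamma$ listed in Theorem~\ref{gammathm}, exactly as one does for Rankin--Selberg or Artin $\gamma$-factors. The statement is multiplicative in nature, so the natural strategy is to first reduce to the supercuspidal case via property (v), then handle supercuspidals via the relation to Artin factors (iii) after twisting to the spherical case, or more robustly, by exploiting the global functional equation (vii) together with a globalization.

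First I would reduce to tempered $\pi$, and then to supercuspidal $\pi$. Given an arbitrary irreducible generic $\pi$, write it as a subquotient of a representation parabolically induced from $\pi_1 \boxtimes \cdots \boxtimes \pi_d$ with each $\pi_i$ quasi-tempered; by property (v), $\gamma(s,\pi,r_0\boxtimes\eta,\psi)$ factors as $\prod_i \gamma(s,\pi_i,r_0\boxtimes\eta,\psi)\prod_{i<j}\gamma(s,\pi_i\times(\pi_j\cdot\eta),\psi)$. The contragredient of such an induced representation is induced from the $\pi_i^\vee$, and $(\pi_i\cdot\eta)^\vee = \pi_i^\vee\cdot\eta^{-1}$, so applying (v) to $(1-s,\pi^\vee,r_0\boxtimes\eta^{-1},\overline\psi)$ gives the mirrored product. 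The Rankin--Selberg factors satisfy their own local functional equation $\gamma(s,\pi_i\times\pi_j',\psi)\gamma(1-s,\pi_i^\vee\times\pi_j'^\vee,\overline\psi)=1$ (a known property of the Jacquet--Piatetski-Shapiro--Shalika / Langlands--Shahidi $\mathrm{GL}_n\times\mathrm{GL}_m$ factor, available in characteristic $p$ by \cite{HL11,Lom09}), so the cross terms pair off. Thus it suffices to prove $\gamma(s,\pi,r_0\boxtimes\eta,\psi)\gamma(1-s,\pi^\vee,r_0\boxtimes\eta^{-1},\overline\psi)=1$ for $\pi$ supercuspidal. Further, since every supercuspidal is quasi-tempered up to an unramified twist, property (vi) lets me assume $\pi$ is unitary supercuspidal without loss of generality (the identity is invariant under $s\mapsto s+s_0$ combined with the substitution on the contragredient side).

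For supercuspidal $\pi$, I would globalize: by the local-global machinery of \cite{HL11}, choose a function field $k$ with a place $v_0$ such that $k_{v_0}\cong F$, a cuspidal automorphic representation $\Pi=\otimes\Pi_v$ of $\mathrm{GL}_n(\mathbb A_k)$ with $\Pi_{v_0}\cong\pi$, a Gr\"ossencharakter $\mathrm H$ with $\mathrm H_{v_0}=\eta$, a character $\Psi$ with $\Psi_{v_0}=\psi$, and a set $S\ni v_0$ outside which everything is unramified, arranged so that at all other places $v\in S\setminus\{v_0\}$ the local component $\Pi_v$ is a principal series (or spherical twisted by a tame character) for which property (iii) applies and the local functional equation is already known for Artin factors. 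Apply the global functional equation (vii) to both $(k,\Pi,\mathrm H,\Psi,S)$ and $(k,\Pi^\vee,\mathrm H^{-1},\Psi,S)$ — using $\Psi$ at $s$ and noting $\overline{\Psi_v}=\Psi_v^{a_v}$ for suitable $a_v$, or simply tracking the $\psi$-dependence via (iv). Combining the two global functional equations, the partial $L$-functions $L^S(s,\Pi,r_0\boxtimes\mathrm H)$ and $L^S(1-s,\Pi^\vee,r_0\boxtimes\mathrm H^{-1})$ cancel, leaving $\prod_{v\in S}\gamma(s,\Pi_v,r_0\boxtimes\mathrm H_v,\Psi_v)\gamma(1-s,\Pi_v^\vee,r_0\boxtimes\mathrm H_v^{-1},\overline{\Psi_v})=1$. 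For $v\in S\setminus\{v_0\}$ the corresponding local factor is $1$ by the known Artin-side functional equation (via (iii)), leaving the desired identity at $v_0$.

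The main obstacle is the globalization step: one must produce a cuspidal $\Pi$ with the prescribed supercuspidal local component at $v_0$ while keeping the components at the auxiliary places in $S$ of a type for which the local functional equation is already available (spherical or tamely ramified principal series), and ensuring the $\psi$-normalization matches. This is precisely the kind of statement established in \cite{HL11} over function fields, so I would invoke it directly; the careful bookkeeping is tracking the additive-character dependence using property (iv), which contributes factors $\omega_\pi(a)^{n\pm1}\eta(a)^{(n\pm1)/2}|a|_F^{\frac{n(n\pm1)}{2}(s-1/2)}$ at $s$ and their analogues at $1-s$ for the contragredient, and checking that these combine to $1$ when $\psi$ and $\overline\psi$ are paired — which they do because $\overline\psi=\psi^{-1}$, $\omega_{\pi^\vee}=\omega_\pi^{-1}$, and the exponent of $|a|_F$ is antisymmetric under $s\mapsto 1-s$.
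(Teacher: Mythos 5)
Your argument is correct and is essentially the first of the two proofs the paper itself invokes: the local-global technique of Henniart--Lomel\'i (reduction to the supercuspidal case by multiplicativity, globalization, and cancellation of the partial $L$-functions between the two global functional equations), which the paper cites as the proof of Corollary~2.4 of \cite{HL13}. The paper also records a second, independent route --- importing the identity from the characteristic-zero case via the close-local-fields transfer of \cite{Gan13} --- which your approach does not require.
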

\begin{proof}We can actually supply two independent proofs. The former uses the local-global technique employed in the proof of Theorem~3.1, see the proof of Corrollary~2.4 of \cite{HL13}. And the latter utilizes the transfer between local fields to import the local functional equation from the characteristic zero case to the characteristic $p$ case, see \cite{Gan13}. 
\end{proof}
We next recall the definition of $L$-functions.

\begin{itemize}
   \item[(viii)] \emph{Let $(F,\pi,\eta,\psi) \in \mathscr{L}(p)$ be tempered. Let $P_{\pi,\eta}(Z)$ be the polynomial in $Z=q_F^{-s}$ that has the same zeros as the rational function $\gamma(s,\pi,r_0 \boxtimes \eta,\psi) \in \mathbb{C}(Z)$ and has $P_{\pi,\eta}(0) = 1$. Then
   \begin{equation*}
      L(s,\pi,r_0 \boxtimes \eta) = P_{\pi,\eta}(Z)^{-1}.
   \end{equation*}
   }
\end{itemize}

Notice that twisted local $L$-functions are independent of the additive character $\psi$. Proposition~7.2 of \cite{Sha90} applies to the Siegel Levi case for ${\rm GSpin}$ groups, since $r = r_1$ is irreducible. Note that tempered representations of ${\rm GL}_n(F)$ are generic. We thus have:

\begin{itemize}
   \item[(ix)] (Tempered representations). \emph{Let $(F,\pi,\eta,\psi) \in \mathscr{L}$(p) be tempered, then $L(s,\pi,r_0 \boxtimes \eta)$ is holomorphic for $\Re(s) >0$.
   }
\end{itemize}

The next property gives the definition of tempered $\varepsilon$-factors.

\begin{itemize}
   \item[(x)] (Root numbers). \emph{Let $(F,\pi,\eta,\psi) \in \mathscr{L}(p)$ be tempered. Then there exists a monomial $\varepsilon(s,\pi,r_0 \boxtimes \eta,\psi) \in \mathbb{C}(Z)$, such that
   \begin{equation*}
      \gamma(s,\pi,r_0 \boxtimes \eta,\psi) \, L(s,\pi,r_0 \boxtimes \eta) = \varepsilon(s,\pi,r_0 \boxtimes \eta,\psi) \, L(1-s,\pi^\vee,r_0 \boxtimes \eta^{-1}).
   \end{equation*}
   }
\end{itemize}

We now turn towards twisted local $L$-functions and $\varepsilon$-factors for not necessarily generic $(F,\pi,\eta,\psi) \in \mathscr{L}(p)$. Notice that every irreducible smooth representation $\pi$ of ${\rm GL}_n(F)$ is the Langlands quotient of a representation that is parabolically induced from quasi-tempered representations.

\begin{itemize}
   \item[(xi)] (Langlands quotient). \emph{For $i=1,\ldots,d$, let $(\pi_{i,0},\eta,\psi) \in \mathscr{L}(p)$ be tempered. Let $\tau$ be the representation of ${\rm GL}_n(F)$ parabolically induced from $\pi_1 \boxtimes \cdots \boxtimes \pi_d$, where $\pi_i = \left| \cdot \right|_F^{s_i} \pi_{i,0}$ and the $s_i$'s are real numbers with $s_1 > \cdots > s_d$. Assume that $\pi$ is the unique Langlands quotient of $\tau$. Then
\begin{align*}
   L(s,\pi,r_0 \boxtimes \eta) &= 
   \prod_{i=1}^{d} L(s + 2s_i, \pi_{i,0}, r_0 \boxtimes \eta)
   \prod_{i<j} L(s + s_i + s_j,\pi_{i,0} \times (\pi_{j,0} \cdot \eta)), \\
   \varepsilon(s,\pi,r_0 \boxtimes \eta,\psi) &= 
   \prod_{i=1}^{d} \varepsilon(s + 2 s_i, \pi_{i,0}, r_0 \boxtimes \eta, \psi)
   \prod_{i<j} \varepsilon(s + s_i + s_j,\pi_{i,0} \times (\pi_{j,0} \cdot \eta),\psi),
\end{align*}
where each $\varepsilon(s,\pi_{i,0} \times (\pi_{j,0} \cdot \eta),\psi)$ and $L(s,\pi_{i,0} \times (\pi_{j,0} \cdot \eta))$ are Rankin-Selberg $L$-functions and root numbers.
   }
\end{itemize}

\section{Proof of Theorem~\ref{gammathm}}

\subsection{The generic case} Recall that ${\bf G} = {\rm GSpin}_m$ with $m = 2n+1$ or $2n$. Let $\bf M$ be the Siegel Levi subgroup of $\bf G$ corresponding to $\Delta - \left\{ \alpha_n \right\}$. We first deal with the case of a generic quadruple $(F,\pi,\eta,\psi) \in \mathscr{L}(p)$. The representation $\pi$ lifts to a representation $\tau = \pi \boxtimes \eta^{-1}$ of $M \hookrightarrow G$. When $(F,\pi,\eta,\psi) \in \mathscr{L}(p)$ is spherical, we have
\begin{equation} \label{sphericalpi}
   \pi \hookrightarrow {\rm ind}_{B_n}^{{\rm GL}_n(F)}(\chi_1 \boxtimes \cdots \boxtimes \chi_n),
\end{equation}
with the $\chi_1$, \ldots, $\chi_n$ unramified characters of $F^\times$. Up to conjugacy in ${\rm GL}_n(\mathbb{C})$, the Satake classification gives
\begin{equation*}
   \pi \longleftrightarrow {\rm diag}(\chi_1(\varpi_F), \ldots, \chi_n(\varpi_F)).
\end{equation*}
Then, the Langlands parameter for the representation $\tau$ corresponds to the diagonal matrix
\begin{equation*}
   {\rm diag}(\chi_1(\varpi_F), \ldots, \chi_n(\varpi_F),\chi_n^{-1}(\varpi_F)\eta(\varpi_F), \ldots, \chi_1^{-1}(\varpi_F)\eta(\varpi_F))
\end{equation*}
of ${}^LG^0 = {\rm GSp}_{2n}(\mathbb{C})$, if $m = 2n+1$, or ${}^LG^0 = {\rm GSO}_{2n}(\mathbb{C})$, if $m = 2n$. Let ${\rm sim}$ denote the similitude character defining ${}^LG^0$. The adjoint action $r$ of ${}^LM$ on ${}^L\mathfrak{n}$ is then $r = (r_0 \circ \rho_n) \boxtimes {\rm sim}^{-1}$ as in \eqref{adjointr}. The Langlands-Shahidi local coefficient \cite{Sha81} leads us to define
\begin{equation}\label{gammafactordefinition}
   \gamma(s,\tau,(r_0 \circ \rho_n) \boxtimes {\rm sim}^{-1},\psi) \mathrel{\mathop:}= C_\chi(s,\tau,w_0).
\end{equation}
The results on the local coefficient of \cite{Lom09} over function fields, allow us to prove all of the properties (i)--(vii) of the Theorem are satisfied for the LS twisted $\gamma$-factors $\gamma(s,\tau,(r_0 \circ \rho_n) \boxtimes {\rm sim}^{-1},\psi)$ in the generic case. Indeed, notice that $r = r_1$ is irreducible and that given a quintuple $(k,\pi,\eta,\psi,S) \in \mathscr{LS}(p)$ we then obtain a representation $\tau$ of ${\bf M}(\mathbb{A}_k)$ from $\pi \boxtimes \eta^{-1}$. We then have that the crude functional equation, Theorem~5.14 of [op. cit.], reads
\begin{equation*}
   L^S(s,\tau,(r_0 \circ \rho_n) \boxtimes {\rm sim}^{-1}) = \prod_{v \in S} \gamma(s,\tau,(r_0 \circ \rho_n) \boxtimes {\rm sim}^{-1},\psi) 
   \, L^S(1-s,\tau,(r_0 \circ \rho_n^\vee) \boxtimes {\rm sim}),
\end{equation*}
giving us Property~(vii) with the above definition of $\gamma$-factors. In Property~(v), we explicitly state the multiplicativity property of the local coefficient found in \S~2 of [op. cit.]. For the relation with Artin $\gamma$-factors, let $(F,\pi,\eta,\psi) \in \mathscr{L}(p)$ spherical of degree $n$. Multiplicativity helps us study the case of a principal series representation, where $\pi$ is given by \eqref{sphericalpi}, with the $\chi_i$'s not necessarily unramified. In this case we obtain
\begin{align} \label{psgamma}
   \gamma (s,\tau,(r_0 \circ \rho_n) \boxtimes {\rm sim}^{-1},\psi)
   	=	\prod_{i=1}^{n} \gamma(s, \tau_i, (r_0 \circ \rho_{n_i}) \boxtimes {\rm sim}^{-1},\psi)
   		\prod_{i<j} \gamma(s,\chi_i \chi_j \eta,\psi),
\end{align}
where the $\gamma$-factors $\gamma(s,\chi_i \chi_j \eta,\psi)$ are abelian $\gamma$-factors of Tate's thesis. Thus reducing the proof of Property~(iii) to the case of $n=1$. The approach taken in \cite{Lom12b} can be used to complete the proof of this and the remaining properties. For example, Section~3 of [\emph{op. cit.}] for generalized spin groups gives the following equalities for degree $1$ quadruples $(\chi,\eta,\psi) \in \mathscr{L}(p)$ in terms of abelian $\gamma$-factors: 
\begin{equation*}\label{Lquot}
   \gamma(s,\tau,(r_0 \circ \rho_1) \boxtimes {\rm sim}^{-1},\psi) = \left\{ \begin{array}{ll} 	\gamma(s,\chi^2 \eta,\psi) & \text{if } {\bf G} = {\rm GSpin}_{3} \\
												1 & \text{if } {\bf G} = {\rm GSpin}_{2}
					\end{array} \right. .
\end{equation*}

\subsection{The general case} Next, we address the case of a not necessarily generic quadruple $(F,\pi,\eta,\psi) \in \mathscr{L}(p)$. In this generality, we have that $\pi$ is the unique Langlands quotient ${\rm J}(\xi)$ of a representation $\xi$ of the form
\begin{equation}
   \xi = {\rm ind}_{P_n}^{{\rm GL}_n(F)} (\pi_1 \boxtimes \cdots \boxtimes \pi_d).
\end{equation} 
Here, $\bfp_n = \GL_{n_1} \times \cdots \times \GL_{n_d}$ and each $\pi_i$, $1 \leq i \leq d$, is a representation of ${\rm GL}_{n_i}(F)$ of the form $\left| \det(\cdot) \right|_F^{s_i} \pi_{i,0}$ with $s_1 > \cdots > s_d$ and each $\pi_{i,0}$ tempered. We then have
\begin{align} \label{gammadef}
   \gamma &(s,\tau,(r_0 \circ \rho_n) \boxtimes {\rm sim}^{-1},\psi) \\ \nonumber
   	&=	\prod_{i=1}^{d} \gamma(s + 2s_i, \tau_i, (r_0 \circ \rho_{n_i}) \boxtimes {\rm sim}^{-1},\psi)
   		\prod_{i<j} \gamma(s + s_i + s_j,\pi_i  \times (\pi_j \cdot \eta),\psi).
\end{align}
This addresses case~(b) of Property~(v). Finally, the rule $\gamma$ on $\mathscr{L}(p)$ defined by $\gamma(s,\pi,r_0 \boxtimes \eta,\psi) \mathrel{\mathop:}= \gamma (s,\tau,(r_0 \circ \rho_n) \boxtimes {\rm sim}^{-1},\psi)$ satisfies properties (i)--(vii).

For uniqueness, let $\gamma$ be a rule on $\mathscr{L}(p)$ satisfying the hypothesis of the theorem. Given $(F,\pi,\eta,\psi) \in \mathscr{L}(p)$, Properties~(v) and (vi) give that $\gamma(s,\pi,r_0 \boxtimes \eta,\psi)$ depends only on the supercuspidal content of $\pi$ (see Remark~2.3 of \cite{HL13}). Fix a supercuspidal quadruple $(F,\pi,\eta,\psi)$ of $\mathscr{L}(p)$. Theorem~3.1 of \cite{HL13} gives a global field $k$, a cuspidal automorphic representation $\xi = \otimes \, \xi_v$ of ${\rm GL}_n(\mathbb{A}_k)$ such that: $\xi_0 \simeq \pi$ and $\xi_v$, $v \neq 0$ is a subquotient of a principal series representation. Applying the Grundwald-Wang theorem of class field theory gives us a Gr\"ossencharakter $\nu = \otimes \nu_v$ such that $\nu_0 \simeq \eta$. We then have a quintuple $(k,\xi,\nu,\phi,S) \in \mathscr{LS}(p)$, by taking an arbitrary non-trivial character $\phi = \otimes \, \phi_v$ of $\mathbb{A}_k/k$. The above discussion leading towards \eqref{psgamma} shows that Property~(v.a) reduces the study of $\gamma(s,\xi_v,r_0 \boxtimes \nu_v,\phi_v)$ for $v \neq 0$ to abelian $\gamma$-factors, which are uniquely determined. Also, partial $L$-functions are uniquely determined by the formulas of Remark~\ref{partialL}. The functional equation for $(k,\xi,\nu,\phi,S) \in \mathscr{LS}(p)$ reads
\begin{equation*}
L^S(s,\xi,r_0 \boxtimes \nu) = \gamma(s,\xi_0,r_0 \boxtimes \nu_0,\phi_0) \prod_{v \in S - \left\{ 0 \right\}} \gamma(s,\xi_v, r_0 \boxtimes \nu_v, \phi_v) \, L^S(1-s,\xi^\vee,r_0 \boxtimes \nu^{-1}).
\end{equation*}
Properties~(i) and (ii) now give that $\gamma(s,\xi_0,r_0 \boxtimes \nu_0,\phi_0) = \gamma(s,\pi,r_0 \boxtimes \eta,\phi_0)$. Finally, by Property~(iv) we can uniquely determine $\gamma(s,\pi,r_0 \boxtimes \eta,\psi)$ from the uniquely determined $\gamma(s,\pi,r_0 \boxtimes \eta,\phi_0)$. \qed

\section{Compatibililty of the twisted local factors with the local Langlands correspondence and stability}
In this section, we prove the compatibility of the twisted local factors with the local Langlands correspondence in positive characteristic of \cite{LRS93} and deduce an important stability property of the $\gamma$-factors under twists by highly ramified characters.
 \subsection{ Equality with Artin factors} Let $\mathscr{G}$ be the class consisting of quadruples $(F,\sigma,\eta,\psi)$, where: $F$ is a non-archimedean local field; $\sigma$ is an $n$-dimensional ${\rm Frob}$-semisimple Weil-Deligne representation; $\eta$ is a character of $\mathcal{W}_F$; and, $\psi$ is a non-trivial additive character of $F$. Given a prime number $p$, we denote $\mathscr{G}(p)$ the subclass of $\mathscr{G}$ whose objects are quadruples $(F,\sigma,\eta,\psi)$ with ${\rm char}(F) = p$.

We have a rule $\gamma$ on $\mathscr{G}$ which assigns a rational function $\gamma(s,\sigma,r_0 \otimes \eta,\psi)$ to every quadruple $(F,\sigma,\eta,\psi) \in \mathscr{G}$, defined by
\begin{equation} \label{galgamma}
   \gamma(s,r_0(\sigma) \otimes \eta,\psi) \mathrel{\mathop:}= 
   \varepsilon(s,(r_0 \circ \sigma) \otimes \eta,\psi) \, \dfrac{L(1-s,(r_0 \circ \sigma^\vee) \otimes \eta^{-1})}{L(s,(r_0 \circ \sigma) \otimes \eta)} \,,
\end{equation}
where the Artin $L$-functions and root numbers on the right half side are those defined by Deligne and Langlands \cite{t79}.

Consider the morphism
\begin{equation*}
\begin{CD}
   \mathscr{L} @>{\rm LLC}>> 	\mathscr{G}
\end{CD}
\end{equation*}
taking $(F,\pi,\eta,\psi)$ to $(F,\sigma,\eta,\psi)$ via the local Langlands correspondence. More precisely, we let $\sigma = \sigma(\pi)$ be the Weil-Deligne representation corresponding to $\pi$ under the local Langlands correspondence \cite{HT01,Hen00,LRS93}; we identify the character $\eta$ of $F^\times$ with a character of $\mathcal{W}_F$ via local class field theory; and $\psi$ remains unchanged.

\begin{theorem}\label{artintwisted}The morphism
\begin{equation*}
\begin{CD}
   \mathscr{L}(p) @>{\rm LLC}>> 	\mathscr{G}(p)
\end{CD}
\end{equation*}
preserves twisted exterior and symmetric square local factors. More precisely, let ${\rm LLC}: (F,\pi,\eta,\psi) \mapsto (F,\sigma,\eta,\psi)$, then
\begin{align*}
   \gamma(s,\pi,r_0 \boxtimes \eta,\psi) &= \gamma(s,(r_0 \circ \sigma) \otimes \eta,\psi), \\
   L(s,\pi,r_0 \boxtimes \eta) &= L(s,(r_0 \circ \sigma) \otimes \eta) \\
   \varepsilon(s,\pi,r_0 \boxtimes \eta,\psi) &= \varepsilon(s,(r_0 \circ \sigma) \otimes \eta,\psi).
\end{align*}
\end{theorem}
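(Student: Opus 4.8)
The plan is to establish the three equalities by reducing, via the axiomatic characterizations on both the analytic and Galois sides, to the case where $\pi$ is supercuspidal and then transporting the identity to a global setting where both sides are known. First I would observe that $\gamma$-factors on $\mathscr{G}(p)$ satisfy the exact analogues of properties (i)--(vii) of Theorem~\ref{gammathm}: naturality, isomorphism invariance, the defining relation \eqref{galgamma} with Artin $L$- and $\varepsilon$-factors, the dependence on $\psi$ (via the standard formula for Deligne--Langlands local constants under $\psi \mapsto \psi^a$ together with the behavior of $\det r_0(\sigma)$), multiplicativity (additivity of Weil--Deligne representations corresponds to the decomposition $r_0(\sigma_1 \oplus \cdots \oplus \sigma_d) = \bigoplus_i r_0(\sigma_i) \oplus \bigoplus_{i<j} (\sigma_i \otimes \sigma_j)$, matching the Rankin--Selberg terms since those too are compatible with LLC by \cite{HT01,Hen00,LRS93}), twists by unramified characters, and the global functional equation for Artin $L$-functions. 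Since LLC for $\GL_n$ in characteristic $p$ is compatible with the local Langlands correspondence for places of a function field and with Rankin--Selberg factors, all these structural properties are transported by LLC. Thus the composite rule $(F,\pi,\eta,\psi) \mapsto \gamma(s,(r_0\circ\sigma(\pi))\otimes\eta,\psi)$ is \emph{a} rule on $\mathscr{L}(p)$ satisfying (i)--(vii), so by the uniqueness half of Theorem~\ref{gammathm} it coincides with $\gamma(s,\pi,r_0\boxtimes\eta,\psi)$.

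Concretely, I would run the uniqueness argument of Section~4 in parallel. By multiplicativity (v) and unramified twists (vi), both sides depend only on the supercuspidal support of $\pi$, so it suffices to treat supercuspidal $(F,\pi,\eta,\psi)$. For such a quadruple, invoke Theorem~3.1 of \cite{HL13} to find a global function field $k$ and a cuspidal automorphic representation $\xi = \otimes\xi_v$ of $\GL_n(\mathbb{A}_k)$ with $\xi_0 \simeq \pi$ and each $\xi_v$ ($v\neq 0$) a subquotient of a principal series, together with a Gr\"ossencharakter $\nu$ with $\nu_0 \simeq \eta$ obtained via Grunwald--Wang, yielding $(k,\xi,\nu,\phi,S)\in\mathscr{LS}(p)$. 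Now compare the two global functional equations: the analytic one (property (vii) of Theorem~\ref{gammathm}) and the Galois one (the product formula for Artin $L$-functions applied to $r_0(\sigma(\xi))\otimes\nu$, where $\sigma(\xi)$ is the global Weil--Deligne parameter attached to $\xi$ through the global Langlands correspondence over function fields \cite{LRS93}). The partial $L$-functions on both sides agree because $L(s,\xi_v,r_0\boxtimes\nu_v) = L(s,(r_0\circ\sigma_v)\otimes\nu_v)$ at unramified $v$ by the explicit Satake formulas of Remark~\ref{partialL}; and at the places $v\in S-\{0\}$ the local $\gamma$-factors agree on both sides since $\xi_v$ is a subquotient of a principal series, where property (v.a) reduces everything to abelian ($\GL_1$) $\gamma$-factors, which are matched by LLC (Tate's thesis vs.\ the one-dimensional Deligne--Langlands constant). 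Dividing the two functional equations forces $\gamma(s,\pi,r_0\boxtimes\eta,\phi_0) = \gamma(s,(r_0\circ\sigma(\pi))\otimes\eta,\phi_0)$, and property (iv) on both sides then upgrades this from the particular $\phi_0$ to an arbitrary $\psi$, giving the first equality in full generality.

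Once the $\gamma$-factor identity holds on all of $\mathscr{L}(p)$, the $L$-factor identity follows: for tempered $\pi$ (hence generic), $L(s,\pi,r_0\boxtimes\eta)$ is, by definition (viii), the inverse of the normalized numerator polynomial of $\gamma(s,\pi,r_0\boxtimes\eta,\psi)$, while on the Galois side $\sigma$ is then a genuine (Frobenius-semisimple, no monodromy ruled out in general, but tempered corresponds to bounded image) representation and $L(s,(r_0\circ\sigma)\otimes\eta)$ is likewise governed by the poles of the $\gamma$-factor; the holomorphy statement (ix) and its Galois counterpart (tempered Artin $L$-functions are holomorphic for $\Re(s)>0$, which is where "$L$ has no common zero with $L(1-s,\ldots)$" gets used) pin down the two $L$-functions to be equal. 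I would then extend to general $\pi$ via the Langlands-quotient formula (xi) and its Galois analogue under $\oplus$. Finally, the $\varepsilon$-factor identity is immediate from $\varepsilon = \gamma \cdot L / L(1-s,\vee)$ on both sides (property (x) and formula \eqref{galgamma}) together with the already-established equalities of $\gamma$- and $L$-factors.

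The main obstacle I anticipate is not any single hard computation but rather the careful bookkeeping required to verify that the Galois-side rule genuinely satisfies \emph{all} of axioms (i)--(vii) — in particular the precise matching of the $\psi$-dependence in (iv), which demands knowing the central character / determinant of $r_0(\sigma)$ in terms of $\omega_\pi$ and $\eta$ (this is exactly the exponent "$n\pm 1$" and "$\frac{n\pm1}{2}$" appearing in the statement, reflecting $\dim r_0 = \binom{n}{2}$ or $\binom{n+1}{2}$ and how $\det$ of the second exterior/symmetric power relates to $\det\sigma = \omega_\pi$), and the compatibility of the Rankin--Selberg factors $\gamma(s,\pi_i\times(\pi_j\cdot\eta),\psi)$ with LLC, which is a theorem of \cite{HT01,Hen00,LRS93} that must be cited correctly. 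A secondary subtlety is ensuring the global parameter $\sigma(\xi)$ produced by the function-field Langlands correspondence is compatible place-by-place with the local LLC used in the statement, so that the global Artin functional equation really decomposes into the local factors we want; this is built into the construction of \cite{LRS93} but should be stated explicitly.
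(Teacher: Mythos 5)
Your proposal is correct and follows essentially the same strategy as the paper: define the Galois-side rule $\gamma^{\rm Gal}$ via ${\rm LLC}$, verify it satisfies axioms (i)--(vii) of Theorem~\ref{gammathm} (with the $\psi$-dependence from the Deligne--Langlands formulas, multiplicativity from $r_0(\sigma\oplus\tau)\otimes\eta$ decomposing into the twisted $r_0$-pieces plus a Rankin--Selberg cross term, and the global functional equation via the compatibility of the global correspondence of Lafforgue/LRS with the local one), then invoke the uniqueness half of Theorem~\ref{gammathm}, and finally deduce the $L$- and $\varepsilon$-identities from the tempered definition and the Langlands classification. Your second paragraph merely unpacks the globalization argument that the cited uniqueness statement already encapsulates, so there is no substantive divergence.
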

\begin{proof}
 The morphism ${\rm LLC}$ allows us to define a rule $\gamma^{\rm Gal}$ on $\mathscr{L}(p)$ by setting
\begin{equation*}
   \gamma^{\rm Gal}(s,\pi,r_0 \boxtimes \eta,\psi) \mathrel{\mathop:}= \gamma(s,(r_0 \circ \sigma(\pi)) \otimes \eta,\psi),
\end{equation*}
for every $(F,\pi,\eta,\psi) \in \mathscr{L}(p)$, where the $\gamma$-factors on the right are the Artin factors defined by equation~\eqref{galgamma}. The approach is to apply the uniqueness part of Theorem~3.1, in order to conclude that
\begin{equation*}
   \gamma(s,\pi,r_0 \boxtimes \eta,\psi) = \gamma^{\rm Gal}(s,\pi,r_0 \boxtimes \eta,\psi).
\end{equation*}
First, notice that the $\gamma$-factor $\gamma(s,(r_0 \circ \sigma(\pi)) \boxtimes \eta,\psi)$ depends only on the underlying Weil representation of the Weil-Deligne representation $\sigma = \sigma(\pi)$, in addition to the character $\eta$. Hence, we are reduced to representations of the Weil group. Artin factors depend only on the isomorphism class of $\sigma$ and are compatible with class field theory, that is $\gamma^{\rm Gal}$ satisfies Property~(ii) of Theorem~3.1. Properties~(i) and (iii) are immediate. Formulas~3.4.4 and 3.4.5 of \cite{t79} gives Properties~(iv) and (vi). Multiplicativity follows from the property
\begin{equation*}
   r_0(\sigma \oplus \tau) \otimes \eta = (r_0(\sigma) \otimes \eta) \oplus (r_0(\tau) \otimes \eta) \oplus (\sigma \otimes (\tau \otimes \eta))
\end{equation*}   
of twisted exterior and symmetric square representations, which gives
\begin{equation*}
   \gamma(s,r_0(\sigma \oplus \tau) \otimes \eta,\psi) = \gamma(s,r_0(\sigma) \otimes \eta,\psi) \gamma(s,r_0(\tau) \otimes \eta,\psi) \gamma(s,\sigma \otimes (\tau\otimes \eta),\psi).
\end{equation*}
Finally Artin factors satisfy a global functional equation. An important point here is that the global Langlands correspondence for ${\rm GL}_n$ over function fields \cite{Laf02} is compatible with the local Langlands correspondence (see \S~15.3 of \cite{LRS93}). Thus properties (i)--(vii) are satisfied.

To conclude, we point out that the local Langlands correspondence for ${\rm GL}_n$ is compatible with the above construction of $L$-functions and $\varepsilon$-factors from the tempered case (see for example \cite{Hen02}). Hence, local $L$-functions and $\varepsilon$-factors are preserved.
\end{proof}
\subsection{Stability of $\gamma$-factors} We now turn towards an interesting question in the Langlands-Shahidi method, that of stability of $\gamma$-factors under twists by highly ramified characters. We note that several important cases are proved in \cite{CKPSS08} under certain geometric conditions for orbital integrals, due to the connection between the local coefficient and Bessel models. However, the Siegel Levi case of ${\rm GSpin}$ groups falls in a different category. In positive characteristic, thanks to the compatibility with ${\rm LLC} : \mathscr{L}(p) \longleftrightarrow \mathscr{G}(p)$, we can give a simple proof:

\begin{proposition} \emph{(Stability)}\label{Stability}. Let $(F,\pi_i,\eta,\psi) \in \mathscr{L}(p)$, $i=1$, $2$, be such that $\omega_{\pi_1} = \omega_{\pi_2}$ and $\eta$ is highly ramified. Then
\begin{equation*}
   \gamma(s,\pi_1,r_0 \boxtimes \eta,\psi) = \gamma(s,\pi_2,r_0 \boxtimes \eta,\psi).
\end{equation*}
\end{proposition}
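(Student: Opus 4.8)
The plan is to transfer the question, via Theorem~\ref{artintwisted}, to the Galois side, where stability of Artin $\gamma$-factors under highly ramified twists is known. Concretely, applying ${\rm LLC}$ we set $\sigma_i = \sigma(\pi_i)$, the $n$-dimensional Frob-semisimple Weil--Deligne representations attached to $\pi_i$, and we view $\eta$ as a character of $\mathcal{W}_F$. By Theorem~\ref{artintwisted} we have
\begin{equation*}
   \gamma(s,\pi_i,r_0 \boxtimes \eta,\psi) = \gamma(s,(r_0 \circ \sigma_i) \otimes \eta,\psi), \quad i = 1,2,
\end{equation*}
so it suffices to prove
\begin{equation*}
   \gamma(s,(r_0 \circ \sigma_1) \otimes \eta,\psi) = \gamma(s,(r_0 \circ \sigma_2) \otimes \eta,\psi)
\end{equation*}
when $\eta$ is sufficiently ramified. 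Note that the condition $\omega_{\pi_1} = \omega_{\pi_2}$ translates, under ${\rm LLC}$, into $\det \sigma_1 = \det \sigma_2$ as characters of $\mathcal{W}_F$ (compatibility of the local Langlands correspondence with central characters and determinants). This is the hypothesis under which stability of Galois $\gamma$-factors holds.

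Next I would invoke the stability result for Artin $\gamma$-factors of pairs. The key observation is that $r_0(\sigma)$, for $r_0 = {\rm Sym}^2$ or $\wedge^2$, can be written, using the identity $r_0(\sigma_1) \ominus r_0(\sigma_2)$ in the Grothendieck group together with $\sigma_1 \oplus \sigma_2^\vee$-type manipulations, in terms of tensor product representations $\sigma_i \otimes \sigma_j$; more precisely, in $R(\mathcal{W}_F)$ one has ${\rm Sym}^2\sigma + \wedge^2 \sigma = \sigma \otimes \sigma$, and $r_0(\sigma_1) - r_0(\sigma_2)$ is a virtual sum of pieces each involving at most one of the $\sigma_i$ tensored with something of bounded dimension. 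The cleanest route, however, is the direct one: stability of $\gamma$-factors of the form $\gamma(s, \rho \otimes \eta, \psi)$ for $\eta$ highly ramified depends, by the Deligne--Henniart formula for $\varepsilon$-factors of highly ramified twists \cite{DH81}, only on $\dim \rho$, the determinant $\det \rho$, the conductor of $\eta$, and $\psi$. Applying this with $\rho = r_0(\sigma_i)$: both have the same dimension $\binom{n}{2}$ or $\binom{n+1}{2}$, and $\det(r_0(\sigma_1)) = \det(r_0(\sigma_2))$ because $\det(r_0(\sigma))$ is an explicit power of $\det \sigma$ times a fixed power of the cyclotomic/trivial character (for ${\rm Sym}^2$ of an $n$-dimensional $\sigma$, $\det = (\det\sigma)^{n+1}$; for $\wedge^2$, $\det = (\det\sigma)^{n-1}$), and $\det\sigma_1 = \det\sigma_2$. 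Hence for $\eta$ ramified beyond the relevant bound, $\gamma(s,(r_0 \circ \sigma_1)\otimes\eta,\psi) = \gamma(s,(r_0\circ\sigma_2)\otimes\eta,\psi)$.

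Assembling: first reduce to the Galois side via Theorem~\ref{artintwisted}; then translate the central character hypothesis into equality of determinants; then apply the Deligne--Henniart stability of abelian-twisted Artin $\gamma$-factors \cite{DH81,Del73} (which gives equality once $\cond(\eta)$ exceeds a bound depending only on $n$), using that $r_0(\sigma_1)$ and $r_0(\sigma_2)$ have equal dimension and equal determinant; finally transfer back. The main obstacle I expect is the bookkeeping in the second step --- verifying that $\det(r_0(\sigma))$ really is the asserted power of $\det\sigma$ (an elementary but character-by-character weight computation on the torus of ${\rm GL}_n(\C)$) --- together with pinning down exactly which normalization of the Deligne--Henniart formula one needs so that the stable value is literally independent of $\sigma_i$ and not merely independent up to a root of unity; here Equation~\eqref{LLCcompatibility}, i.e. the \emph{exact} equality in Theorem~\ref{artintwisted} (not just up to a root of unity), is what makes the argument go through cleanly.
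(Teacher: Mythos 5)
Your proposal is correct and follows essentially the same route as the paper's own proof: reduce to the Galois side via Theorem~\ref{artintwisted}, invoke the Deligne--Henniart stability formula of \cite{DH81} for $\gamma(s,(r_0\circ\sigma_i)\otimes\eta,\psi)$ with $\eta$ highly ramified, and observe that the stable value depends on $\sigma_i$ only through $\det(r_0\circ\sigma_i)=(\det\sigma_i)^{n\pm1}$, which is determined by $\omega_{\pi_1}=\omega_{\pi_2}$. The only caveat is your parenthetical claim that the required ramification bound on $\eta$ depends only on $n$; it also depends on the ramification of the $\sigma_i$, but this does not affect the argument since the $\pi_i$ are fixed.
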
 

\subsection*{Proof} For each $i=1$, $2$, let the map ${\rm LLC}$ take the quadruple $(F,\pi_i,\eta,\psi)$ to $(F,\sigma_i,\eta,\psi)$. From the proof of Theorem \ref{artintwisted}, we have that
\begin{equation*}
   \gamma(s,\pi_i,r_0 \boxtimes \eta,\psi) = \gamma^{\rm Gal}(s,\sigma_i,r_0 \boxtimes \eta,\psi).
\end{equation*}
The main result of \cite{DH81} shows that there is an element $c = c(\eta,\psi) \in F^\times$, such that $\psi^c(x) = \eta(1+x)$ for $x \in \mathfrak{p}^{\left[ k/2 \right]+1}$, and
\begin{equation*}
   \gamma^{\rm Gal}(s,\sigma_i,r_0 \boxtimes \eta,\psi) = \det(r_0 \circ \sigma_i(c))^{-1} \, \gamma(s,\eta,\psi)^{\frac{n\pm1}{2}}.
\end{equation*}
The condition $\omega_{\pi_1} = \omega_{\pi_2}$ gives $\det(r_0 \circ \sigma_1(c)) = \det(r_0 \circ \sigma_2(c))$. \qed

\begin{Remark} The results of \cite{HL11} can be applied directly to give a weaker statement of stability requiring $\eta^2$ to be highly ramified (see \S~4.4 of \cite{HL12}). What we have provided here is the full stability property of twisted exterior and symmetric square $\gamma$-factors.
\end{Remark}
\section{Compatibility of the twisted symmetric and exterior square factors with the Deligne-Kazhdan correspondence}
 In \cite{Gan13}, the first author showed that the Langlands-Shahidi local coefficient is compatible with the Deligne-Kazhdan theory for sufficiently close local fields. The results of \cite{Gan13} hold true for local coefficients associated to generic representations of Levi subgroups of a split reductive group. In this section, we will first briefly review the Deligne-Kazhdan correspondence from \cite{Del84, Kaz86}. Since the twisted $\gamma$-factor on the analytic side is defined via the local coefficient (Equation \ref{gammafactordefinition}), we apply the results of \cite{Gan13}  to deduce the compatibility of the twisted symmetric and exterior square local factors with the Deligne-Kazhdan correspondence. 

Recall that two non-archimedean local fields $F$ and $F'$ are \textit{$m$-close} if $\calo_F/\calp_F^m \cong \calo_{F'}/\calp_{F'}^m$.  Note that given a local field $F'$ of characteristic $p$ and an integer $m \geq 1$, there exists a local field $F$ of characteristic 0 such that $F'$ is $m$-close to $F$. We just have to choose the field $F$ to be ramified enough. 
\subsection{Deligne's theory}\label{Delignetheory}

In \cite{Del84},  Deligne proved that if  $F$ and $F'$ are $m$-close, then
\[ \Gal(\bar{F}/F)/I_F^m \cong \Gal(\bar{F'}/F')/I_{F'}^m, \] where $\bar{F}$ is a separable algebraic closure of $F$, $I_F$ is the inertia subgroup and $I_F^m$ denotes the $m$-th higher ramification subgroup of $I_F$ with upper numbering. This gives a bijection
\begin{align*}
& \text{\{Cont., complex, f.d. representations of $\Gal(\bar{F}/F)$ trivial on $I_F^m$\}}\\
& \overset{\Del_m}\longleftrightarrow  \text{\{Cont., complex, f.d. representations of $\Gal(\bar{F'}/F')$ trivial on $I_{F'}^m$\}}. 
\end{align*}
 Furthermore, he proved that the Artin $L$- and $\epsilon$-factors are preserved under $\Del_m$ (Check for Proposition 3.7.1 of \cite{Del84}). Let us recall this result in the context of the twisted symmetric and exterior square factors. To state it precisely, let us first introduce some notation.
 
Given $m \geq 1$, let $\mathscr{G}^m$ be the subclass of $\mathscr{G}$ whose objects consist of quadruples $(F, \sigma, \eta, \psi)$ where: $F$ is a non-archimedean local field, $\depth(\sigma) < m$, $\depth(\eta) < m$ and $\psi$ is a non-trivial additive character of $F$. Note that $\depth(\sigma)<m$ implies that $\sigma$ is trivial on $I_F^m$. 

\begin{definition}\label{DelAsso}Let $(F', \sigma', \eta', \psi') \in \mathscr{G}^m$. We say that the quadruple $(F, \sigma, \eta, \psi) \in \mathscr{G}$  \textit{is $\Del_m$-associated to $(F', \sigma', \eta', \psi')$} if:
\begin{enumerate}
\item $F'$ is $m$-close to $F$,
\item $\sigma' = \Del_m(\sigma)$,
\item $\eta$ corresponds to $\eta'$ via the isomorphism $F^\times/(1+\calp_F^m) \cong F'^\times/(1+\calp_{F'}^m)$,
\item With $k := \cond(\psi')$, $\psi$ satisfies: 
\begin{align*}
 \cond(\psi) &= k,\\\nonumber
\psi|_{\calp_{F}^{k-m}/\calp_{F}^{k}} &= \psi'|_{\calp_{F'}^{k-m}/\calp_{F'}^{k}}.
\end{align*}
\end{enumerate}
\end{definition}
We then have the following proposition.

\begin{proposition}[Proposition 3.7.1 of \cite{Del84}]\label{Prop3.7.1}Let $m \geq 1$ and let $ r_0 = \Sym^2$ or $\wedge^2$. Let $(F', \sigma', \eta', \psi') \in \mathscr{G}^m$. For each $(F, \sigma, \eta, \psi)$ that is $\Del_m$-associated to $(F', \sigma', \eta', \psi')$, we have
\begin{align*}
 \gamma(s,(r_0 \circ \sigma) \otimes \eta,\psi)  &= \gamma(s,(r_0 \circ \sigma') \otimes \eta',\psi'), \\
    L(s,(r_0 \circ \sigma) \otimes \eta) &= L(s,(r_0 \circ \sigma') \otimes \eta') \\
\varepsilon(s,(r_0 \circ \sigma) \otimes \eta,\psi)  &= \varepsilon(s,(r_0 \circ \sigma') \otimes \eta',\psi').
\end{align*}
\end{proposition}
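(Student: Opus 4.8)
The plan is to deduce Proposition~\ref{Prop3.7.1} directly from Deligne's Proposition~3.7.1 of \cite{Del84} applied to the single Weil--Deligne representation $(r_0 \circ \sigma) \otimes \eta$, so the only real content is a bookkeeping argument that the passage from $\sigma$ to $(r_0 \circ \sigma) \otimes \eta$ is compatible with the bijection $\Del_m$ and preserves the depth/conductor constraints. First I would record that, since $\depth(\sigma) < m$, the representation $\sigma$ of $\mathcal{W}_F$ (equivalently its underlying Galois representation) is trivial on $I_F^m$; applying the functor $r_0$ and twisting by $\eta$ (which, being of depth $< m$, is also trivial on $I_F^m$ by the same reasoning) produces a representation $r_0(\sigma) \otimes \eta$ that is again trivial on $I_F^m$. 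Thus $(r_0(\sigma))\otimes\eta$ and $(r_0(\sigma'))\otimes\eta'$ both lie in the source, respectively target, of Deligne's bijection $\Del_m$ on finite-dimensional representations trivial on the $m$-th higher ramification subgroup.

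Next I would check that $\Del_m\big((r_0 \circ \sigma) \otimes \eta\big) = (r_0 \circ \sigma') \otimes \eta'$. This follows because $\Del_m$ is induced by a fixed isomorphism of the quotient groups $\Gal(\bar F/F)/I_F^m \cong \Gal(\bar F'/F')/I_{F'}^m$, hence is a functor that commutes with all natural operations on representations: direct sums, tensor products, symmetric and exterior powers, and duals. Since $\sigma' = \Del_m(\sigma)$ by hypothesis (2) of Definition~\ref{DelAsso} and $\eta' $ corresponds to $\eta$ under the induced isomorphism on characters, functoriality gives $r_0(\sigma') = \Del_m(r_0(\sigma))$ and $\eta' = \Del_m(\eta)$, and therefore $(r_0 \circ \sigma')\otimes \eta' = \Del_m\big((r_0 \circ \sigma)\otimes\eta\big)$. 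One subtlety to address explicitly is the Frobenius-semisimple/Weil--Deligne bookkeeping: on the nilpotent part $N$, the operator $r_0$ acts through the induced representation of the Lie algebra, and since $\depth$ bounds force $N$ to be essentially irrelevant at this level (indeed, $\Del_m$ is only claimed on honest Galois representations trivial on $I_F^m$), one reduces to the underlying semisimple Weil representations exactly as in the proof of Theorem~\ref{artintwisted}; I would remark that the $\gamma$-, $L$-, and $\varepsilon$-factors of $(r_0\circ\sigma)\otimes\eta$ depend only on this underlying piece together with $\psi$.

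Finally I would invoke Proposition~3.7.1 of \cite{Del84} verbatim: it asserts that for a representation $\rho$ of $\Gal(\bar F/F)$ trivial on $I_F^m$ and its $\Del_m$-correspondent $\rho'$, together with additive characters $\psi, \psi'$ related as in condition~(4) of Definition~\ref{DelAsso}, one has $\gamma(s,\rho,\psi) = \gamma(s,\rho',\psi')$, $L(s,\rho) = L(s,\rho')$, and $\varepsilon(s,\rho,\psi) = \varepsilon(s,\rho',\psi')$. Applying this with $\rho = (r_0\circ\sigma)\otimes\eta$ and $\rho' = (r_0\circ\sigma')\otimes\eta'$ yields the three displayed equalities. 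The main obstacle is not the algebra but the compatibility of the condition $\depth(\eta) < m$ with triviality of $\eta$ on $I_F^m$ and with Deligne's normalization of the additive-character hypothesis (condition~(4)): one must verify that the conductor bound $k = \cond(\psi')$ used in Deligne's statement is consistent across the $m$-close fields and that twisting by $\eta$ does not push the conductor of $(r_0\circ\sigma)\otimes\eta$ outside the range where $\Del_m$ is defined — this is where I would be most careful, but it is forced by the definitions of the subclass $\mathscr{G}^m$ and $\Del_m$-association, so the argument goes through.
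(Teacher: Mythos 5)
Your approach is exactly the one the paper intends: the proposition is stated without proof precisely because it is nothing more than Deligne's Proposition~3.7.1 of \cite{Del84} applied to the single representation $(r_0\circ\sigma)\otimes\eta$, once one observes that $\Del_m$, being induced by a group isomorphism $\Gal(\bar F/F)/I_F^m\cong\Gal(\bar F'/F')/I_{F'}^m$, commutes with $r_0$ and with twisting by $\eta$, and that the depth hypotheses keep everything trivial on $I_F^m$. So the skeleton of your argument is correct and matches the paper.

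One claim in your second paragraph is wrong and should be repaired: the $L$- and $\varepsilon$-factors of a Weil--Deligne representation do \emph{not} depend only on the underlying Weil representation. Only the $\gamma$-factor has that property (the contributions of the monodromy operator $N$ to $L$ and $\varepsilon$ cancel in the quotient defining $\gamma$); for instance, the special representation of $\mathcal{W}_F\times\SL_2$ attached to the Steinberg has a different $L$-factor from its Frobenius semisimplification. Since the proposition asserts equality of $L$ and $\varepsilon$ as well, you cannot discard $N$. The correct bookkeeping is that the bijection $\Del_m$ in the paper is already formulated (Equation~(1.3) of the introduction) on Weil--Deligne representations of depth $<m$, i.e.\ the nilpotent part is transported along with the Weil representation; $r_0$ applied to a Weil--Deligne representation carries an induced monodromy operator, and this construction commutes with $\Del_m$ for the same functoriality reason you invoke. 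With that correction, Deligne's result applies verbatim to the pair $\bigl((r_0\circ\sigma)\otimes\eta,\ (r_0\circ\sigma')\otimes\eta'\bigr)$ together with $\psi\leftrightarrow\psi'$ as in condition~(4) of Definition~\ref{DelAsso}, and all three equalities follow.
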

\subsection{Kazhdan's theory} Let ${\bf G}$ be a split, connected reductive group defined over $\mathbb{Z}$ and let $G := {\bf G}(F)$. For the remainder of this section we use the following notation: For an object $X$ associated to the field $F$, we will use the notation $X'$ to denote the corresponding object over $F'$. 
 In \cite{Kaz86}, Kazhdan proved that  given $m \geq 1$, there exists $l \geq m$ such that if $F$ and $F'$ are $l$-close, then there is an algebra isomorphism $\Kaz_m:\mathscr{H}(G, K_m) \rightarrow \mathscr{H}(G', K_m')$, where $K_m $ is the $m$-th usual congruence subgroup of ${\bf G}(\calo_F)$. (In fact, this result holds with $l=m$. Cf. Corollary 4.15 of \cite{Gan13}).
Hence, when the fields $F$ and $F'$ are sufficiently close, we have a bijection
 \begin{align*}
 &\text{\{Irreducible, admissible representations $(\pi, V)$ of $G$ such that $\pi^{K_m} \neq 0$\}} \\
  &\longleftrightarrow\text{\{Irreducible, admissible representations  $(\pi', V')$ of $G'$ such that $\pi'^{K_m'} \neq 0$\}}.
\end{align*}
A variant of the Kazhdan isomorphism is now available for split reductive groups. More precisely, with $\bfg$ as above, a presentation has been written down for the Hecke algebra $\fH(G, I_{m})$, where $I_{m}$ is the $m$-th Iwahori filtration subgroup of $G$ (cf. \cite{How85} for $\GL_n$ and \cite{Gan13} for any split reductive group).  It has then been shown that when $F$ and $F'$ are $m$-close, 
\begin{equation}\label{variantKazhdan}
\fH(G, I_{m}) \cong \fH(G', I_{m}')
\end{equation} (cf. \cite{Lem01} for $\GL_n$ and \cite{Gan13} for any split reductive group).   This gives rise to a bijection
 \begin{align}\label{IwahoriHeckeIso}
 &\text{\{Irreducible, admissible representations $(\pi, V)$ of $G$ such that $\pi^{I_{m}} \neq 0$\}} \\
  &\overset{\kappa_m}\longleftrightarrow\text{\{Irreducible, admissible representations  $(\pi', V')$ of $G'$ such that $\pi'^{I_{m'}} \neq 0$\}}.\nonumber 
\end{align}

\subsection{Compatibility of twisted symmetric and exterior square factors over close local fields}
In \cite{Gan13}, this variant of the Kazhdan isomorphism (Equation \ref{variantKazhdan}) was used to study the Langlands-Shahidi local coefficients over close local fields.  In this section, we will apply Theorem 6.5 of \cite{Gan13} to deduce the analogue of Proposition \ref{Prop3.7.1} above on the analytic side. 

Given $m \geq 1$, let $\mathscr{L}^m$ be the subclass of $\mathscr{L}$ whose objects consist of quadruples $(F, \pi, \eta, \psi)$ where: $F$ is a non-archimedean local field, $\depth(\pi) < m$, $\depth(\eta) < m$ and $\psi$ is a non-trivial additive character of $F$. Note that by Lemma  8.2 of \cite{Gan13}, $\depth(\pi) < m \implies \pi^{I_{m+1}} \neq 0$,
\begin{definition}\label{KazAsso}
Let $l \geq m+1$ and let $(F', \pi', \eta', \psi') \in \mathscr{L}^m$. We say that the quadruple $(F, \pi, \eta, \psi)$  \textit{is $\Kaz_l$-associated to $(F', \pi', \mu', \psi')$} if:
\begin{enumerate}
\item $F'$ is $l$-close to $F$,
\item $\pi' = \kappa_l(\pi)$,
\item $\eta$ corresponds to $\eta'$ via the isomorphism $F^\times/(1+\calp^l) \cong F'^\times/(1+\calp'^l)$,
\item With $k := \cond(\psi')$, $\psi$ satisfies: 
\begin{align}\label{psicorr}
 \cond(\psi) &= k,\\\nonumber
\psi|_{\calp_{F}^{k-l}/\calp_{F}^{k}} &= \psi'|_{\calp_{F'}^{k-l}/\calp_{F'}^{k}}.
\end{align}
\end{enumerate}
\end{definition}
\begin{Remark}\label{RemarkForLuis}
Note that given any $(F', \pi', \eta', \psi') \in \mathscr{L}(p)$, there exists an integer $l$ and a quadruple $(F, \pi, \eta, \psi)\in \mathscr{L}$ (with $\Char(F) =0$) that is $\Kaz_l$-associated to $(F', \pi', \eta', \psi')$. 
\end{Remark}

We then have the following proposition.
\begin{proposition}\label{Kazhdantwisted}Let $n \geq 2$ and $m \geq 1$. Let $r_0 = \Sym^2$ or $\wedge^2$. Let $(F', \pi', \eta', \psi')  \in \mathscr{L}^m$. There exists $l = l(n,m) \geq m+1$ such that for any $(F, \pi, \eta, \psi)\in \mathscr{L}$ that is $\Kaz_l$-associated to $(F', \pi', \eta', \psi')$, we have
\begin{align*}
L(s, \pi, r_0 \boxtimes \eta) &= L(s, \pi', r_0 \boxtimes \eta')\\
\gamma(s, \pi, r_0 \boxtimes \eta, \psi) &= \gamma(s, \pi', r_0 \boxtimes \eta', \psi')\\
\epsilon(s, \pi, r_0 \boxtimes \eta, \psi) &= \epsilon(s, \pi', r_0 \boxtimes \eta', \psi').
\end{align*} 

\end{proposition}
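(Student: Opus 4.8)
The strategy is to reduce everything to the Langlands–Shahidi local coefficient, which by Equation~\eqref{gammafactordefinition} defines the analytic twisted $\gamma$-factor, and then invoke the preservation of local coefficients over close local fields established in \cite{Gan13}. Concretely, let $\bfg = \GSpin_m$ be the split group over $\Z$ attached to $r_0$ as in \eqref{adjointr}, and let $\bfm \cong \GL_n \times \GL_1$ be its Siegel Levi. A quadruple $(F,\pi,\eta,\psi) \in \mathscr{L}$ gives rise to the genuine representation $\tau = \pi \boxtimes \eta^{-1}$ of $M = \bfm(F)$, and by definition $\gamma(s,\pi,r_0 \boxtimes \eta,\psi) = C_\chi(s,\tau,w_0)$. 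The first step is therefore to check that the data $(\pi,\eta)$ being $\Kaz_l$-associated to $(\pi',\eta')$ (in the sense of Definition~\ref{KazAsso}) translates to the pair $(\tau,\psi)$ satisfying the hypotheses of Theorem~6.5 of \cite{Gan13}: here one uses that $\kappa_l$ for $\GL_n$ together with the tame correspondence on characters $F^\times/(1+\calp^l) \cong F'^\times/(1+\calp'^l)$ gives the Kazhdan transfer $\kappa_l$ for $\bfm = \GL_n \times \GL_1$ (the Hecke-algebra isomorphism of \cite{Gan13} is compatible with products of split groups), and that the depth bound $\depth(\pi), \depth(\eta) < m$ plus the matching of $\psi$ to level $l \geq m+1$ is exactly what is needed.

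Granting this, I would extract from Theorem~6.5 of \cite{Gan13} an explicit $l = l(n,m) \geq m+1$ such that for $(\tau,\psi)$ and $(\tau',\psi')$ transferred via $\kappa_l$ one has the equality of local coefficients $C_\chi(s,\tau,w_0) = C_{\chi'}(s,\tau',w_0)$ as rational functions in $q^{-s} = q'^{-s}$ (using that $F$ and $F'$ are $l$-close so $q_F = q_{F'}$). This immediately yields $\gamma(s,\pi,r_0\boxtimes\eta,\psi) = \gamma(s,\pi',r_0\boxtimes\eta',\psi')$ for \emph{generic} $\pi$. The $l$ coming out of \cite{Gan13} depends on the group $\bfg$ — equivalently on $n$ — and on the depth $m$, consistent with the statement; I would make this dependence explicit by tracking the constant in the proof of Theorem~6.5 of \cite{Gan13}.

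Next I would pass from $\gamma$-factors to $L$-functions and $\varepsilon$-factors, and from the generic to the general case. For a generic tempered $(F,\pi,\eta,\psi)$, the $L$-factor is defined (Property~(viii)) as $P_{\pi,\eta}(q_F^{-s})^{-1}$ where $P_{\pi,\eta}$ is the numerator polynomial of $\gamma$ normalized at $0$; since the $\gamma$-factors agree as elements of $\C(q_F^{-s}) = \C(q_{F'}^{-s})$ and $q_F = q_{F'}$, the polynomials $P_{\pi,\eta}$ and $P_{\pi',\eta'}$ coincide, hence the $L$-factors agree, and then the monomial $\varepsilon$-factor agrees by its defining functional equation (Property~(x)). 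For a general irreducible $\pi$, write it as a Langlands quotient of an induced representation from quasi-tempered $\pi_{i,0}$; Property~(xi) and Property~(v) express $L$, $\varepsilon$ and $\gamma$ of $\pi$ in terms of the twisted factors of the $\pi_{i,0}$ together with Rankin–Selberg factors $\gamma(s,\pi_{i,0}\times(\pi_{j,0}\cdot\eta),\psi)$, $L(s,\pi_{i,0}\times(\pi_{j,0}\cdot\eta))$, $\varepsilon(s,\pi_{i,0}\times(\pi_{j,0}\cdot\eta),\psi)$. Here one needs that the Langlands-quotient decomposition and the depths of the constituents are controlled by $\kappa_l$ — that is, $\kappa_l$ commutes with parabolic induction and Langlands quotients and preserves the depths of the $\pi_{i,0}$, which follows from the compatibility of the Kazhdan isomorphism with Jacquet modules (again \cite{Gan13}) — and that the Rankin–Selberg factors are preserved over close fields, which is the $\GL_n \times \GL_1$-twisted case and is exactly Theorem~6.5 of \cite{Gan13} applied to the relevant Levi, or alternatively can be cited from the close-field results for Rankin–Selberg factors. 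Assembling the pieces via the product formulas finishes the general case, and I would take $l(n,m)$ to be the maximum of the finitely many constants arising from the Siegel-Levi local coefficient and from the Rankin–Selberg transfers over all the partitions of $n$ that can occur.

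\medskip

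The main obstacle is bookkeeping the passage from the \emph{generic} statement — which is what \cite{Gan13} delivers directly for the local coefficient — to \emph{arbitrary} irreducible $\pi$: one must verify that $\kappa_l$ is compatible with parabolic induction and with taking Langlands quotients, and that it does not increase depths of the tempered constituents beyond $m$ (so that those constituents still lie in the range where \cite{Gan13} applies, possibly after enlarging $l$). A secondary subtlety is uniformity of the constant $l = l(n,m)$: a priori each partition $n = n_1 + \cdots + n_d$ and each Rankin–Selberg pair contributes its own closeness requirement, and one must check these form a finite set so that a single $l$ works. Both of these are soft structural points rather than deep ones, given the machinery of \cite{Gan13} and \cite{HL13}, but they are where the real content of the argument lies.
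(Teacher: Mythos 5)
Your proposal follows essentially the same route as the paper's proof: the generic case via the identification $\gamma = C_\chi$ and Theorem~6.5 of \cite{Gan13}, the non-generic case via the Langlands classification together with the compatibility of $\kappa_l$ with temperedness, unramified twists and parabolic induction (Section~5 of \cite{Gan13}, plus the depth equality of \cite{MP96}) and the Rankin--Selberg transfer of Theorem~8.6 of \cite{Gan13}, and then $L$ and $\varepsilon$ deduced from $\gamma$. The one place where you defer the work --- ``tracking the constant in the proof of Theorem~6.5 of \cite{Gan13}'' --- is exactly where the paper's proof has its concrete content: to apply that theorem one must exhibit a vector $v_0 \in V^{I_l \cap \GL_n(F)}$ with $W_{v_0}(e) \neq 0$, which the paper produces using the essential vector of \cite{BH97} together with the conductor bound $\cond(\pi) \leq n^2\depth(\pi) + n^2 \leq n^2 m + n^2$, giving the explicit choice $l \geq n^2 m + n^2 + 4$ (enlarged so that Theorem~8.6 of \cite{Gan13} also holds). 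Without some such argument the existence of a uniform $l(n,m)$ is asserted rather than proved, so you should supply this step; the rest of your outline matches the paper.
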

\begin{proof} This proposition is a consequence of Theorem 6.5 of \cite{Gan13} (and the argument below is similar to Theorem 8.6 of \cite{Gan13}).  Let us first deal with the $\gamma$-factor for a generic quadruple $(F', \pi', \eta', \psi')$. 
Using Property (iv) of Theorem \ref{gammathm}, we may and do  assume that $\cond(\psi) = 0$ (Here note that if $\psi \leftrightarrow \psi'$ as in Equation \ref{psicorr}, then $\psi^a \leftrightarrow \psi'^{a'}$ where $a'$ corresponds to the image of $a$ under the isomorphism $F^\times/(1+\calp^l) \cong F'^\times/(1+\calp'^l)$). For each $l \geq m+1$ and each $(F, \pi, \eta, \psi)\in \mathscr{L}^m$ that is $\Kaz_l$-associated to $(F', \pi', \eta', \psi')$ we have, by Corollary 5.5 of \cite{Gan13}, that the quadruple $(F, \pi, \eta, \psi)$ is also generic. Let $W(\pi, \chi)$ denote the Whittaker model of $\pi$ (where $\chi$ is as in Section \ref{LCPM}). Let $\bfg = \GSpin_{2n+1}$ for $r_0 = \Sym^2$ and $\bfg = \GSpin_{2n}$ for $r_0 = \wedge^2$, and let $I_m$ be the $m$-th Iwahori filtration subgroup of $G$. We show that there exists an $ l = l(n,m) \geq m+1$, depending only on $n$ and $m$, such that 
\begin{itemize}
\item Theorem 8.6 of \cite{Gan13} holds,
\item $V^{I_l \cap \GL_n(F)} \neq 0$,
\item There exists $v_0 \in V^{I_l \cap \GL_n(F)}$ with $W_{v_0}(e) \neq 0$. 
\end{itemize}

Using Theorem 2.3.6.4 of \cite{Yu09}, it is easy to see that
\[\cond(\pi) \leq n^2\depth(\pi)+n^2 \leq n^2m+n^2.\]
Let $v_0$ be the essential vector of the representation $\pi$ as in Theorem 2 of \cite{BH97} and let $W_{v_0} \in W(\pi, \psi)$ be the corresponding Whittaker function. This vector has the property that $W_{v_0}(e) \neq 0$ (Cf. Proposition 1 of \cite{BH97}). Set $m_1 = n^2m+n^2$. Since $\cond(\pi) \leq m_1$, this vector lies in 
\[K_n(m_1) =  \left\{\left(\begin{array}{cc}
A & b \\
c & d \\
\end{array}\right)\in \GL_n(\calo) :A \in \GL_{n-1}(\calo),\, c\equiv 0\, mod \, \calp_{F}^{m_1},\, d \equiv 1\, mod \, \calp_{F}^{m_1}\right\}.\]
Since $I_{m_1} \cap \GL_n(F) \subset K_n(m_1)$, we see that $v_0 \in V^{I_{m_1} \cap \GL_n(F)}$.  Let $l \geq n^2m +n^2+4$ be large enough so that Theorem 8.6 of \cite{Gan13} holds (This theorem is needed for Equation \ref{RSCLF} below).  Then by Theorem 6.5 of \cite{Gan13} and we obtain that for any $(F, \pi, \eta, \psi) $ that  is $\Kaz_l$-associated to $(F', \pi', \eta', \psi')$,
\begin{equation}\label{Equation1}\gamma(s, \pi ,r_0 \boxtimes \eta, \psi) = \gamma(s, \pi',r_0 \boxtimes \eta', \psi').
\end{equation}

This finishes the case of a generic quadruple $(F', \pi', \eta', \psi')$.  

The $\gamma$-factor in the non-generic case is defined using the Langlands classification in Equation \ref{gammadef}. Write $\pi' = J(\xi')$ where 
\begin{equation*}
   \xi' = {\rm ind}_{P_n'}^{{\rm GL}_n(F')} (\pi_1' \boxtimes \cdots \boxtimes \pi_d').
\end{equation*} 
with each $\pi_i'$ quasi-tempered as before. Then $\depth(\pi') = \depth(\pi_1' \boxtimes \cdots \boxtimes \pi_d')$   by Theorem 5.2 of \cite{MP96}. Now, since temperedness, twists by unramified characters and normalized parabolic induction are all compatible with the Deligne-Kazhdan correspondence (Cf. Section 5 of \cite{Gan13}), we in fact obtain that $\pi = J(\xi)$ where
\begin{equation*}
   \xi = {\rm ind}_{P_n}^{{\rm GL}_n(F)} (\pi_1 \boxtimes \cdots \boxtimes \pi_d).
\end{equation*} 
with $\pi_i \leftrightarrow \pi_i'$ via $\kappa_l$ of Equation \ref{IwahoriHeckeIso}. 
Now, by Theorem 8.6 of \cite{Gan13}, we have
\begin{equation}\label{RSCLF}
\gamma(s,\pi_i  \times (\pi_j \cdot \eta),\psi) = \gamma(s,\pi_i'  \times (\pi_j' \cdot \eta'),\psi').
\end{equation}
(Check \cite{Gan12} and \cite{ABPS13} for other proofs of the above equality).
Combining these observations with Equation \ref{Equation1} and Equation \ref{gammadef}, the equality of $\gamma$-factors follows for a not-necessarily generic quadruple $(F', \pi', \eta', \psi')$. 

The $L$-function is first defined for tempered representations via the $\gamma$-factor, and then extended to the general case via the Langlands classification (Cf. Section \ref{LEpsilonFactors}).  Hence the equality of $L$-functions over close local fields follows from the above. Since the root numbers are defined using $L$-functions and $\gamma$-factors, the equality of root numbers over close local fields also follows from the above.\qedhere
\end{proof}

\section{Plancherel measures}
In this section, we will express the Plancherel measure associated to representations of the Siegel Levi subgroups  over local function fields in terms of the $\gamma$-factors described in Theorem \ref{gammathm}, and then deduce their compatibility with the Deligne-Kazhdan correspondence. 
\subsection{Plancherel measures and $\gamma$-factors} We let  $(\bfg, \bfm) = (\GSpin_{m}, \GL_n \times \GL_1)$ with $m = 2n+1$ or $2n$. Let $F$ be a non-archimedean local field. Given $(F,\pi,\eta,\psi) \in \mathscr{L}$ generic, let $\tau$ be the representation of $M$ obtained from $\pi \boxtimes \eta^{-1}$. The definition of Plancherel measure was recalled in Section \ref{LCPM}, where $\rho, w_{0}, \gamma_{w_{0}}(G/P)$ are introduced.

\begin{proposition}\label{Plancherelgamma} Let $(F,\pi,\eta,\psi) \in \mathscr{L}$ and let $\tau$ be obtained from $\pi \boxtimes \eta^{-1}$ as above. Let $r_0 = {\rm Sym}^2$ if $\bfg = {\rm GSpin}_{2n+1}$ and $r_0 = \wedge^2$ if $\bfg = {\rm GSpin}_{2n}$. Then
\begin{align}\label{Plancherelformula}
   \gamma_{w_{0}}(G/P)^{-2}\mu(s, \tau, w_{0}) &={\gamma(s, \pi, r_0 \boxtimes \eta, \psi) \gamma(-s, \pi^\vee, r_0 \boxtimes \eta^{-1}, \bar{\psi})} \\
   & = \gamma(s, (r_0 \circ \sigma) \boxtimes \eta, \psi) \gamma(-s, (r_0 \circ \sigma^\vee) \boxtimes \eta^{-1}, \bar{\psi}). \nonumber 
\end{align}
\end{proposition}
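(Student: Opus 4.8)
The plan is to establish the first equality of \eqref{Plancherelformula} via the standard Langlands–Shahidi machinery relating the Plancherel measure to the local coefficient, and then to obtain the second equality as a formal consequence of Theorem~\ref{artintwisted}. For the first equality, I would start from the defining relation \eqref{LScoefficient} for the local coefficient $C_\chi(s,\tau,\tw_0)$ and the relation between the intertwining operator composition and the Plancherel measure recalled in \S\ref{LCPM}, namely $A(s,\tau,\tw_0)\circ A(-s,w_0(\tau),\tw_0^{-1}) = \mu(s,\tau,w_0)^{-1}\gamma_{w_0}(G/P)^2$. Applying \eqref{LScoefficient} once for $(s,\tau)$ and once for $(-s,w_0(\tau))$, composing the two, and using that $w_0$ is an involution on $\bfm$ (so $w_0(w_0(\tau))\simeq\tau$ and the Whittaker functionals match because $\chi$ is compatible with $\tw_0$), one gets
\[
   \lambda_\chi(s,\tau) = C_\chi(s,\tau,\tw_0)\,C_\chi(-s,w_0(\tau),\tw_0^{-1})\,\lambda_\chi(s,\tau)\circ\bigl(A(s,\tau,\tw_0)\circ A(-s,w_0(\tau),\tw_0^{-1})\bigr).
\]
By uniqueness of the Whittaker functional this forces $C_\chi(s,\tau,\tw_0)\,C_\chi(-s,w_0(\tau),\tw_0^{-1}) = \mu(s,\tau,w_0)\,\gamma_{w_0}(G/P)^{-2}$. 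Now I identify the two local-coefficient factors with $\gamma$-factors: by the definition \eqref{gammafactordefinition}, $C_\chi(s,\tau,w_0) = \gamma(s,\pi,r_0\boxtimes\eta,\psi)$, and one checks that $w_0(\tau) = w_0(\pi\boxtimes\eta^{-1})$ corresponds, under the self-associateness $w_0\bfm w_0^{-1}=\bfm$ together with the explicit description $\bfm\cong\GL_n\times\GL_1$, to the contragredient datum $\pi^\vee\boxtimes(\eta\,\omega_\pi^{?})^{-1}$ twisted appropriately; the cleanest route is to invoke the local functional equation (the Corollary in \S\ref{LEpsilonFactors}) together with the $\psi\mapsto\bar\psi$ dependence, giving $C_\chi(-s,w_0(\tau),\tw_0^{-1}) = \gamma(-s,\pi^\vee,r_0\boxtimes\eta^{-1},\bar\psi)$. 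This yields the first line of \eqref{Plancherelformula}. This is essentially Shahidi's argument from \S\,6--7 of \cite{Sha90}, transported to positive characteristic, where all the needed properties of the local coefficient over function fields are available from \cite{Lom09} and are already packaged into Theorem~\ref{gammathm}.

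For the second equality, I would simply apply Theorem~\ref{artintwisted}: when $\Char(F)=p$, the LLC sends $\pi$ to $\sigma=\sigma(\pi)$ with $\gamma(s,\pi,r_0\boxtimes\eta,\psi) = \gamma(s,(r_0\circ\sigma)\otimes\eta,\psi)$, and likewise for $\pi^\vee$ with parameter $\sigma^\vee$, so the right-hand side of the first line equals $\gamma(s,(r_0\circ\sigma)\otimes\eta,\psi)\,\gamma(-s,(r_0\circ\sigma^\vee)\otimes\eta^{-1},\bar\psi)$. In characteristic zero the same substitution is Shahidi's and is classical; since the proposition is stated for a general non-archimedean $F$, I would phrase this step uniformly, noting that the characteristic-zero case is \cite{Sha90} and the characteristic-$p$ case is Theorem~\ref{artintwisted}.

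The main obstacle, I expect, is the careful bookkeeping in identifying $w_0(\pi\boxtimes\eta^{-1})$ with the contragredient datum and pinning down exactly which twist of $\eta$ appears — this is where the structure theory of the Siegel Levi of $\GSpin_m$ (the self-associateness of $\bfm$, the action of $w_0$ on $X^*(\bfm)$, and the similitude character $\mathrm{sim}$ appearing in \eqref{adjointr}) has to be used precisely, and where the normalization of $\gamma_{w_0}(G/P)$ via the Haar measures fixed in \S\ref{Haar} must be tracked so that no extra volume factor is dropped. Once the local functional equation of \S\ref{LEpsilonFactors} is brought in, this identification becomes routine, but stating it correctly with the $\bar\psi$ and the $-s$ in the right places is the delicate point. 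The rest — the composition-of-intertwining-operators identity and the appeal to uniqueness of Whittaker models — is standard and requires no characteristic-specific input beyond what \cite{Lom09} and Theorem~\ref{gammathm} already guarantee.
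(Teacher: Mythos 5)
Your derivation of the identity $\gamma_{w_0}(G/P)^{-2}\mu(s,\tau,w_0)=C_\chi(s,\tau,\tw_0)\,C_\chi(-s,w_0(\tau),\tw_0^{-1})$ matches the paper's starting point, but there are three genuine gaps after that. First, the step you call "the cleanest route" does not work: the local functional equation relates $\gamma(s,\pi,r_0\boxtimes\eta,\psi)$ to $\gamma(1-s,\pi^\vee,r_0\boxtimes\eta^{-1},\bar\psi)$, i.e.\ it is a relation between two $\gamma$-factors that are already defined; it says nothing about the local coefficient $C_\chi(-s,w_0(\tau),\tw_0^{-1})$, which involves a different representation ($w_0(\tau)$, not $\tau$ or $\tau^\vee$) and a different Weyl representative ($\tw_0^{-1}$, not $\tw_0$), and which is the object you must identify. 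The paper does this by first reducing to $\pi$ supercuspidal, writing $\pi=\left|\det(\cdot)\right|_F^{-s_0}\pi_0$ with $\pi_0$ unitary, and invoking Proposition~3.1.3 of \cite{Sha81} together with Lemma~3.1 of \cite{Sha88} to get $C_\chi(-s,w_0(\pi\boxtimes\eta^{-1}),\tw_0^{-1})=C_{\bar\chi}(-s,\pi^\vee\boxtimes\eta,\tw_0)$, which is then a $\gamma$-factor by definition \eqref{gammafactordefinition}. Without these inputs (which require unitarity, hence the reduction) your argument has no way to produce the right-hand side of the first line.

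Second, the proposition is stated for arbitrary $(F,\pi,\eta,\psi)\in\mathscr{L}$, and the Plancherel measure is defined for non-generic $\pi$, whereas the local coefficient is not. Most of the paper's proof is the resulting induction: after the supercuspidal case one passes to representations induced from supercuspidals via Langlands' lemma and the factorization of intertwining operators, which produces the extra Rankin--Selberg blocks $\mu(s,\tau_{i,j},\tw_{i,j})=\mu(s,\pi_i\times(\pi_j\cdot\eta),\tw_0')$ matching the multiplicativity of the $\gamma$-factors, and then to discrete series, tempered representations, and Langlands quotients. Your proposal omits this entirely. Third, for the second equality your remark that the characteristic-zero case is "classical" is not accurate: at the level of individual factors the LLC compatibility in characteristic $0$ is only known up to a root of unity \cite{Hen2010}, and the paper needs the observation of \cite{GT11} that the product $\gamma(s,\pi,r_0\boxtimes\eta,\psi)\gamma(-s,\pi^\vee,r_0\boxtimes\eta^{-1},\bar\psi)$ equals the ratio $\gamma(s,\cdot)/\gamma(1-s,\cdot)$, in which the roots of unity cancel. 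Your appeal to Theorem~\ref{artintwisted} is correct for ${\rm char}(F)=p$, but the characteristic-zero half of the second equality needs this cancellation argument.
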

\begin{proof}
With the notation of \S~\ref{LCPM}, we have a composition of intertwining operators
\begin{equation*}
\begin{CD}
   {\rm I}(-s,w_0(\pi \boxtimes \eta^{-1})) @>{\rm A}(-s,w_0(\pi \boxtimes \eta^{-1}),\tilde{w}_0^{-1})>> 
   {\rm I}(s,\pi \boxtimes \eta^{-1}) @>{\rm A}(s,\pi \boxtimes \eta^{-1},\tilde{w}_0)>> {\rm I}(-s,w_0(\pi \boxtimes \eta^{-1})).
\end{CD}
\end{equation*}
Then
\begin{equation*}
   \gamma_{w_0}(G/P)^{-2} \mu(s,\pi \boxtimes \eta^{-1},w_0) = C_\chi(s,\pi \boxtimes \eta^{-1},\tilde{w}_0) C_\chi(-s,w_0(\pi \boxtimes \eta^{-1}),\tilde{w}_0^{-1}),
\end{equation*}
follows by twice applying equation \eqref{LScoefficient}. 

First assume $\pi$ is supercuspidal. Then $\pi = \left| \det(\cdot) \right|_F^{-s_0} \pi_0$, with $\pi_0$ unitary. We have that
\begin{equation*}
   C_\chi(-s,w_0(\pi \boxtimes \eta^{-1}),\tilde{w}_0^{-1}) = C_\chi(-s-2s_0,w_0(\pi_0 \boxtimes \eta^{-1}),\tilde{w}_0^{-1})
\end{equation*}
Then we can apply Proposition 3.1.3 of \cite{Sha81} and Lemma 3.1 of \cite{Sha88} for unitary $\pi_0$, and we see that
\[C_\chi(-s,w_0(\pi \boxtimes \eta^{-1}),\tilde{w}_0^{-1}) = C_{\bar{\chi}}(-s,\pi^\vee \boxtimes \eta,\tilde{w}_0).\]
Using Equation \ref{gammafactordefinition}, we obtain that
\begin{align*}
   \gamma_{w_0}&(G/P)^{-2} \mu(s,\pi \boxtimes \eta^{-1},w_0) \\
      			  &= \gamma(s, \pi \boxtimes \eta^{-1}, (r_0 \circ \rho_n) \boxtimes {\rm sim}^{-1}, {\psi})
			  \gamma(s, \pi^\vee \boxtimes \eta, (r_0 \circ \rho_n) \boxtimes {\rm sim}^{-1}, \bar{\psi})\\
			  & = \gamma(s, \pi, r_0 \boxtimes \eta, {\psi}) \gamma(-s, \pi^\vee, r_0\boxtimes \eta^{-1}, \bar{\psi})
\end{align*}

Next, suppose that
\begin{equation*}
   \pi \hookrightarrow {\rm ind}_{P_n}^{{\rm GL}_n(F)} (\pi_1 \boxtimes \cdots \boxtimes \pi_d),
\end{equation*}
where each $\pi_i$, $1 \leq i \leq j$, is supercuspidal. We then have what is known as Langlands lemma, although we use Shahidi's algorithmic proof (Lemma~2.1.2 of \cite{Sha81}), to decompose Weyl group elements and the corresponding unipotent radicals. This gives
\begin{equation*}
   \tilde{w}_0 = \prod_i \left( \prod_{j < i} \tilde{w}_{i,j} \right) \tilde{w}_i  \text{ and } 
   \delta_{w_0}(G/P) = \prod_i \prod_{j < i} \delta(G_i/P_i)_{w_i} \delta(G_{i,j}/P_{i,j})_{w_{i,j}},
\end{equation*}
and obtain a factorization of the intertwining operators, Theorem~2.1.1 of \cite{Sha81}. This allows us to prove
\begin{align} \label{intdecomp}
   {\rm A}(s,\tau,\tilde{w}_0) {\rm A}(-s,\tilde{w}_0(\tau),\tilde{w}_0^{-1}) & = \prod_i {\rm A}(s,\tau_i,\tilde{w}_i) {\rm A}(-s,\tilde{w}_i(\tau_i),\tilde{w}_i^{-1}) \\
   				& \times \prod_{j<i} {\rm A}(s,\tau_{i,j},\tilde{w}_{i,j}) {\rm A}(-s,\tilde{w}_{i,j}(\tau_{i,j}),\tilde{w}_{i,j}^{-1}) \, . \nonumber
\end{align}
Here, $\tau_i$ is the representation $\pi_i \boxtimes \eta^{-1}$ of $M_i = {\rm GL}_{n_i}(F) \times {\rm GL}_1(F) \hookrightarrow G_i = {\rm GSpin}_{n_i}(F)$. For the representations $\tau_{i,j}$, we let ${\bf M}_{i,j}$ be the Levi subgroup of ${\bf G}_{i,j} = {\rm GSpin}_{n_i + n_j}$ corresponding to $\theta_{i,j} = \Delta \backslash  \left\{ \alpha_{n_i}, \alpha_{n_i+n_j} \right\}$; then $\tau_{i,j}$ is the representation of $M_{i,j}$ lifted from $(\pi_i \boxtimes \pi^\vee_j) \boxtimes \eta^{-1}$ of $({\rm GL}_{n_i}(F) \times {\rm GL}_{n_j}(F)) \times {\rm GL}_1(F) \hookrightarrow G_{i,j} = {\rm GL}_{n_i+n_j}(F) \times {\rm GL}_1(F) \hookrightarrow {\rm GSpin}_{n_i+n_j}(F)$. The Weyl group elements are $w_i = w_{l,G_i} w_{l,M_i}$ and $w_{i,j} = w_{l,G_{i,j}} w_{l,M_{i,j}}$. Then, equation~\eqref{intdecomp} leads to
\begin{equation} \label{mudecomp}
   \delta_{w_0}^{-2}(G/P) \mu(s,\tau,\tilde{w}_0) = \delta_{w_0}^{-2}(G/P) \prod_i \mu(s,\tau_i,\tilde{w}_i) \prod_{j<i} \mu(s,\tau_{i,j},\tilde{w}_{i,j}), 
\end{equation}
where we have equation~\eqref{Plancherelformula} for each $\mu(s,\tau_i,\tilde{w}_i)$. Furthermore, we have that $\mu(s,\tau_{i,j},\tilde{w}_{i,j}) = \mu(s,\pi_i \times (\pi_j \cdot \eta),\tilde{w}_0')$. Plancherel measures for general linear groups and the Langlands-Shahidi local coefficient are explored in \cite{Sha83}, the connection to Rankin-Selberg $\gamma$-factors in characteristic $p$ can be made through the results of \cite{HL13}.

For general not necessarily generic $(F,\pi,\eta,\psi) \in \mathscr{L}$, let $\pi$ obtained as in Property~(xi) of \S~4 from the representation of ${\rm GL}_n(F)$ parabolically induced from $\pi_1 \boxtimes \cdots \boxtimes \pi_d$, with each $\pi_i = \left| \cdot \right|_F^{s_i} \pi_{i,0}$ and $\pi_{i,0}$ is tempered. We first extend equation~\eqref{mudecomp} to the case when $\pi$ is a discrete series representation, and then the tempered case; we omit the details. Plancherel measures in general satisfy
\begin{equation*}
   \delta_{w_0}^{-2}(G/P) \mu(s,\tau,\tilde{w}_0) = \delta_{w_0}^{-2}(G/P) \prod_i \mu(s + 2s_i,\tau_i,\tilde{w}_i) \prod_{j<i} \mu(s + s_i + s_j,\tau_{i,j},\tilde{w}_{i,j}),
\end{equation*}
where the definition is compatible with that of $\gamma$-factors on $\mathscr{L}$. 

The first equality of equation~\eqref{Plancherelformula} is possible in characteristic $0$ thanks to Corollary~3.6 of \cite{Sha90} and is now a consequence of Theorem \ref{gammathm} of this article for local function fields. With respect to the second equality of Equation \ref{Plancherelformula} for local fields of characteristic $0$, we follow the ingenious observation made in the proof of Proposition 9.2 of \cite{GT11}.  More precisely, Henniart \cite{Hen2010} proved that, in characteristic 0, the LLC preserves the twisted exterior and symmetric square $\gamma$-factors up to a root of unity $\alpha$, that is
\begin{align*}
\gamma(s, \pi, r_0 \boxtimes \eta, \psi)  = \alpha \, \gamma(s, (r_0 \circ \sigma) \otimes \eta, \psi).
\end{align*}
Then the key observation is that
\begin{align*}
\gamma(s, \pi, r_0 \boxtimes \eta, \psi) \gamma(-s, \pi^\vee, r_0 \boxtimes \eta^{-1}, \bar{\psi}) &= \frac{\gamma(s, \pi, r_0 \boxtimes \eta, \psi)}{ \gamma(1-s, \pi, r_0 \boxtimes \eta, \psi)} \text{ (Equation 3.10 of \cite{Sha90})}\\
&=\frac{\alpha \, \gamma(s, (r_0 \circ \sigma) \otimes \eta, \psi)  }{\alpha \, \gamma(1-s, (r_0 \circ \sigma) \otimes \eta, \psi) }\\
  &= \gamma(s, (r_0 \circ \sigma) \otimes \eta, \psi) \gamma(-s, (r_0 \circ \sigma^\vee) \otimes \eta^{-1}, \bar{\psi}).
\end{align*}
This proves the second equality of Equation \ref{Plancherelformula}  for local fields of characteristic 0. For local function fields, this second equality is a consequence of Theorem \ref{artintwisted}.
\end{proof}

\subsection{Transfer of Plancherel measures} Having expressed the Plancherel measure in terms of the $\gamma$-factors, we obtain the following corollary on the equality of Plancherel measures over close local fields. More precisely,
\begin{Corollary}\label{Planchereltransfer}
Let $n \geq 2$ and $m \geq 1$. Let $r_0 = \Sym^2$ or $\wedge^2$. Let $(F', \pi', \eta', \psi')  \in \mathscr{L}^m$. There exists $l = l(n,m) \geq m+1$ such that for any $(F, \pi, \eta, \psi)\in \mathscr{L}$ that is $\Kaz_l$-associated to $(F', \pi', \eta', \psi')$, we have
\[\mu(s, \pi \boxtimes \eta^{-1}, w_{0}) = \mu(s, \pi' \boxtimes \eta'^{-1}, w_{0}),\]
\end{Corollary}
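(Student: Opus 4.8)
The plan is to read off the transfer of the Plancherel measure from the formula of Proposition~\ref{Plancherelgamma}, which expresses $\gamma_{w_0}(G/P)^{-2}\mu(s,\tau,w_0)$ as a product of two of our twisted $\gamma$-factors, and then invoke Proposition~\ref{Kazhdantwisted} for the transfer of those $\gamma$-factors, once the constant $\gamma_{w_0}(G/P)$ has been shown to be the same over the two fields. First I would use that $\mu(s,\pi\boxtimes\eta^{-1},w_0)$ depends only on $s$, the class of $\pi\boxtimes\eta^{-1}$, and $w_0$ — in particular not on the auxiliary additive character (see \S~\ref{LCPM}) — so we may fix $\psi$ of conductor $0$ on $F$ together with a $\Kaz_l$-compatible $\psi'$ on $F'$ (which then also has conductor $0$); with this normalization $(F,\pi,\eta,\psi)$ is still $\Kaz_l$-associated to $(F',\pi',\eta',\psi')$, and it is the normalization used throughout the proof of Proposition~\ref{Kazhdantwisted}. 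Running the manipulation carried out in the proof of Proposition~\ref{Plancherelgamma} (the local functional equation of \S~\ref{LEpsilonFactors}, equivalently Equation~(3.10) of \cite{Sha90}), the formula becomes
\[
   \mu(s,\pi\boxtimes\eta^{-1},w_0)=\gamma_{w_0}(G/P)^{2}\,\frac{\gamma(s,\pi,r_0\boxtimes\eta,\psi)}{\gamma(1-s,\pi,r_0\boxtimes\eta,\psi)},
\]
and the same identity holds over $F'$. Thus the corollary reduces to two equalities: $\gamma_{w_0}(G/P)=\gamma_{w_0}(G'/P')$, and $\gamma(t,\pi,r_0\boxtimes\eta,\psi)=\gamma(t,\pi',r_0\boxtimes\eta',\psi')$ for the two relevant parameters $t$.

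The $\gamma$-factor equality is immediate: taking $l=l(n,m)$ as in Proposition~\ref{Kazhdantwisted} (the hypotheses there — $n\ge 2$, $(F',\pi',\eta',\psi')\in\mathscr{L}^m$, and $(F,\pi,\eta,\psi)$ $\Kaz_l$-associated to it — are exactly those of the corollary), its conclusion is an identity of rational functions in $q_F^{-s}$, with $q_F=q_{F'}$ since $F$ and $F'$ are $l$-close. Hence the equality holds at $t=s$ by the proposition itself and at $t=1-s$ by substitution, and nothing beyond Proposition~\ref{Kazhdantwisted} is needed for this part. (Alternatively one can keep the unmanipulated formula $\gamma_{w_0}(G/P)^2\gamma(s,\pi,r_0\boxtimes\eta,\psi)\gamma(-s,\pi^\vee,r_0\boxtimes\eta^{-1},\bar\psi)$ and transfer the second factor after checking that $\kappa_l$ is compatible with contragredients and that character inversion and complex conjugation of $\psi$ respect $\Kaz_l$-association; the route via the functional equation is cleaner because it only ever involves the single quadruple $(F,\pi,\eta,\psi)$.)

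It remains to show $\gamma_{w_0}(G/P)=\gamma_{w_0}(G'/P')$, and this is the step I expect to require the most care. Because $\cond(\psi)=0$, the self-dual measure satisfies $\mu_\psi(\calo_F)=1$, so by the normalization of \S~\ref{Haar} the measure $d\bar n$ on $\bar N$ is the canonical one assigning volume $1$ to $\bar N\cap I$, and the same holds over $F'$. With this normalization the integral $\gamma_{w_0}(G/P)=\int_{\bar N}q_F^{\langle 2\rho,H_M(\bar n)\rangle}\,d\bar n$ can be evaluated by stratifying $\bar N$ according to the cells of the Iwasawa/Bruhat decomposition — i.e.\ by the Gindikin--Karpelevich integration formula — as a fixed rational function of $q_F$ whose numerator and denominator are products indexed by the positive roots sent to negative roots by $w_0$; this combinatorial recipe is dictated entirely by the root datum of $(\bfg,\bfm)=(\GSpin_m,\GL_n\times\GL_1)$ and by $w_0$, all of which are defined over $\Z$ and therefore the same for $F$ and $F'$. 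Since $q_F=q_{F'}$, the two values coincide. Combining this with the previous paragraph and the two displayed formulas yields $\mu(s,\pi\boxtimes\eta^{-1},w_0)=\mu(s,\pi'\boxtimes\eta'^{-1},w_0)$, proving the corollary. The point that must be nailed down is precisely that, once the Haar measure is pinned by the conductor-$0$ normalization, $\gamma_{w_0}(G/P)$ is a universal rational function of $q_F$ carrying no hidden dependence on the field $F$ itself — making this explicit via the Gindikin--Karpelevich computation, done uniformly over $\Z$, is where the argument hinges.
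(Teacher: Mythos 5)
Your proposal is correct and follows the same route as the paper, which simply combines Proposition~\ref{Plancherelgamma} with Proposition~\ref{Kazhdantwisted} (the paper also notes alternative routes via Proposition~\ref{Prop3.7.1} together with Theorem 8.7 of \cite{Gan13}, or the direct argument in Section 7 of \cite{Gan13}). Your extra care in checking that $\gamma_{w_0}(G/P)=\gamma_{w_0}(G'/P')$ — via the conductor-zero normalization of the measure and the Gindikin--Karpelevich evaluation as a universal rational function of $q_F=q_{F'}$ — addresses a point the paper leaves implicit, and is a worthwhile addition rather than a deviation.
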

\begin{proof} This is a consequence of Proposition~\ref{Plancherelgamma} and Proposition \ref{Kazhdantwisted}. We can also deduce this by combining Proposition~\ref{Plancherelgamma} and Proposition~\ref{Prop3.7.1} of this article with Theorem 8.7 of \cite{Gan13}. A direct proof of this equality, without involving the $\gamma$-factors, can be found in Section 7 of \cite{Gan13}.
\end{proof}

\bibliographystyle{amsplain}
\bibliography{all}
\bigskip
{\sc \Small Radhika Ganapathy, Department of Mathematics, The University of British Columbia, Vancouver}\\
\emph{\Small E-mail address: }\texttt{\Small rganapat@math.ubc.ca}\\
{\sc \Small Luis Lomel\'i, Department of Mathematics, University of Oklahoma, Norman, Oklahoma }\\
\emph{\Small E-mail address: }\texttt{\Small lomeli@math.ou.edu}

\end{document}